\documentclass[a4paper,12pt]{article}

\usepackage[intlimits]{amsmath}
\usepackage{amssymb}
\usepackage{amsthm}
\usepackage{mathrsfs}
\usepackage{cite}
\usepackage{color}
\usepackage{bm}

\newcommand{\la}{\langle}
\newcommand{\ra}{\rangle}
\newcommand{\pr}{\partial}
\newcommand{\dom}{\Omega}

\newcommand{\im}{\mathfrak{Im}\;}

\newcommand{\vc}{\boldsymbol}

\newcommand{\R}{\mathbb{R}}
\newcommand{\C}{\mathbb{C}}

\newcommand{\dd}{\,\text{d}}


\newtheorem{thm}{Theorem}
\newtheorem{prop}{Proposition}[section]
\newtheorem{lem}{Lemma}[section]


\title{An inverse boundary value problem for certain anisotropic quasilinear elliptic equations}

\author{C\u{a}t\u{a}lin I. C\^{a}rstea\thanks{School of Mathematics, Sichuan University, Chengdu, Sichuan, 610064, P.R.China; email: catalin.carstea@gmail.com} \and Ali Feizmohammadi\thanks{Department of Mathematics, University College London, London, UK-WC1E 6BT, United Kingdom; email: a.feizmohammadi@ucl.ac.uk}}\date{}

\begin{document}
\maketitle

\begin{abstract}
In this paper we prove uniqueness in the inverse boundary value problem for quasilinear elliptic equations whose linear part is the Laplacian and nonlinear part is the divergence of a function analytic in the gradient of the solution. The main novelty in terms of the result is that the coefficients of the nonlinearity are allowed to be ``anisotropic''. As in previous works, the proof reduces to an integral identity involving the tensor product of the gradients of 3 or more harmonic functions. Employing a construction method using Gaussian quasi-modes, we obtain a convenient family of harmonic functions to plug into the integral identity and establish our result.
\end{abstract}

\section{Introduction}

Let $\dom\subset\R^{1+n}$, $n\geq2$, be an open, bounded domain, with $C^{1,1}$ boundary. (The slightly unusual choice to work in dimension $1+n$ rather than $n$ is due to the origin in hyperbolic equations of some of the methods we will employ.)

In $\dom$ we wish to consider quasilinear boundary value problems of the form
\begin{equation}\label{J-eq}
\left\{\begin{array}{l}\nabla\cdot\vc{\mathcal{J}}(\vc x, u,\nabla u)=0,\\[5pt] u|_{\pr\dom}=f.\end{array}\right.
\end{equation}
Here $\vc{\mathcal{J}}:\dom\times\R\times\R^{1+n}\to\R^n$ is  a function of suitable regularity. 

As an illustration, we may interpret \eqref{J-eq} to be a model for electrical conduction, with $u$ being the electric potential and $\vc{\mathcal{J}}$ being the current density.  A linear medium is one for which Ohm's law $\vc{\mathcal{J}}(\vc x,s,\vc p)=\sigma(\vc x)\vc p$ holds, where $\sigma$ is a (possibly matrix valued) function with a positive lower bound. A more general form for the function $\vc{\mathcal{J}}(\vc x,s,\vc p)$ would then correspond to nonlinear conductive media. 

To the problem \eqref{J-eq}, assuming we can show solutions exist and have sufficient regularity, we can associate the Dirichlet-to-Neumann map
\begin{equation}
\Lambda_{\vc{\mathcal{J}}}f=\left.\vc\nu\cdot\vc{\mathcal{J}}(\vc x,u,\nabla u)\right|_{\pr\dom},
\end{equation}
where $\vc\nu$ is the outer-pointing unit normal to $\pr\dom$, and $u$ is the solution of \eqref{J-eq}. In the conduction phenomena interpretation of \eqref{J-eq}, the Dirichlet-to-Neumann map gives for each potential distribution on the boundary the corresponding current density through the boundary. It is therefore a natural mathematical object to represent the totality of possible experimental data that may in principle be gathered from boundary measurements.

The inverse boundary value problem proposed by Calder\'on in \cite{Ca} is to invert the correspondence ${\vc{\mathcal{J}}}\to\Lambda_{\vc{\mathcal{J}}}$. The important subproblem of uniqueness is the question of injectivity for the correspondence ${\vc{\mathcal{J}}}\to\Lambda_{\vc{\mathcal{J}}}$. In general uniqueness cannot hold, as was first noted by Luc Tartar (account given in \cite{KV}) for linear problems. Suppose $\Phi:\overline\dom\to\overline\dom$ is a smooth diffeomorphism such that $\Phi(\vc x)=\vc x$ for all $\vc x\in\pr\dom$. Let
\begin{equation}
[\Phi_*\vc{\mathcal{J}}](\vc x,s,\vc p)=\left[(\det D\Phi)^{-1}(D\Phi)^t\vc{\mathcal{J}}\left(\cdot,s,(D\Phi)\vc p\right)\right]\circ\Phi^{-1}(\vc x).
\end{equation}
Then (see \cite{SuU}, \cite{S3})
\begin{equation}
\Lambda_{\Phi_*\vc{\mathcal{J}}}=\Lambda_{\vc{\mathcal{J}}}.
\end{equation}
It is conjectured that this is the only obstruction to uniqueness. However, except in dimension $1+n=2$ (e.g. \cite{S3}) there are only very partial results even in the linear case.

In this paper we will avoid the possibility for non-uniquenes by considering functions $\vc{\mathcal{J}}$ of a less general kind. One restriction will be to take  $\vc{\mathcal{J}}(\vc x,s,\vc p)$ to be a polynomial  in $\vc p$ (or an analytic function in $\vc p$) with coefficients depending on $\vc x$, with a zero zero order coefficient. The other restriction will be that the first order term will be $\vc p$. This is equivalent to saying that the linear part of equation \eqref{J-eq} is the Laplacian $\Delta$. This restriction in particular makes the obstruction to uniqueness described above impossible, as any nontrivial diffeomorphism would transform the Laplacian term into a second order differential operator with non-constant coefficients. 

For elliptic semilinear or quasilinear equations, there are a number of uniqueness results that are known. For semilinear equations, examples include \cite{IN}, \cite{IS}, \cite{S2}, \cite{FO}, \cite{LLLS1}, \cite{LLLS2}, \cite{KU}. For quasilinear equations, not in divergence form, see \cite{I2}. For  quasilinear equations in divergence form, when $\vc{\mathcal{J}}(\vc x, s, \vc p)=  c(\vc x,s)\vc p$ see \cite{S1}, \cite{SuU}, \cite{EPS}; when $\vc{\mathcal{J}}(\vc x,s,\vc p)=A(\vc x,s)\vc p$ with $A$ a matrix, see \cite{S3}; when $\vc{\mathcal{J}}(\vc x, s, \vc p)= \vc A(s,\vc p)$, see \cite{MU}, \cite{Sh}; when $\vc{\mathcal{J}}(\vc x, s, \vc p)=\vc A(\vc x,\vc p)$ in 2D see \cite{HS}; when $\vc{\mathcal{J}}(\vc x, s, \vc p)=\sigma(\vc x)\vc p+\vc b(\vc x)|\vc p|^2$, see \cite{KN}, \cite{CNV}; when $\vc{\mathcal{J}}(\vc x, s, \vc p)=\sigma(\vc x)\vc p+a(\vc x)|\vc p|^{q-2}\vc p$, $q\in(1,2)\cup(2,\infty)$, see \cite{CK}.   We can also mention  \cite{C} for quasilinear time-harmonic Maxwell systems.

\subsection{Assumptions and notation}

 Let $R>0$ be such that $\dom\subset B_R^{\R^{1+n}}=\{\vc x\in\R^{1+n}:|\vc x|<R\}$. For the coordinates of $\vc x\in\dom$ we will use the notation
\begin{equation}
\vc x=( x_0, x_1, x_2,\ldots, x_n)=( x_0,\vc x')=( x_0, x_1,\vc x'')=( x_0, x_1, x_2,\vc x'''),
\end{equation}
and correspondingly introduce the differential operators
\begin{equation}
\nabla=(\pr_0,\pr_1,\ldots,\pr_n),\quad \nabla'=(0,\pr_1,\ldots,\pr_n)
\end{equation}
and
\begin{equation}
\Delta=\sum_{j=0}^n\pr_j^2,\quad \Delta'=\sum_{j=1}^n\pr_j^2.
\end{equation}
Operators $\nabla''$, $\nabla'''$, $\Delta''$, $\Delta'''$ can also be defined in the same way.

In this paper we will assume that $\vc{\mathcal{J}}$ has the form
\begin{equation}\label{J-cond-1}
\vc{\mathcal{J}}(\vc x,s,\vc p)=\vc p+\sum_{k=2}^N \vc{\mathcal{J}}_k(\vc x,\vc p)+\vc{\mathcal{R}}(\vc x,s,\vc p),
\end{equation}
where
\begin{equation}\label{J-cond-2}
\vc{\mathcal{J}}_k(\vc x,\vc p)= \sum_{i_1,\ldots,i_k=0}^n \vc J_{k;i_1\cdots i_k}(\vc x)p_{i_1}\cdots p_{i_k}=\vc J_k:{\vc p}^{\otimes k},
\end{equation}
\begin{equation}\label{J-cond-3}
\vc J_{k;i_1\cdots i_k}\in C_0^{\infty}(\dom;\R^{1+n}),\quad\max_{k=2,N}||\vc J_{k;i_1\cdots i_k}||_{W^{1,\infty}(\dom)}<L,
\end{equation}
and $\mathcal{R}$ satisfies
\begin{equation}\label{J-cond-4}
|D_{\vc x} \vc{\mathcal{R}}(\vc x,s,\vc p)|+|\pr_s \vc{\mathcal{R}}(\vc x,s,\vc p)|<L |\vc p|^{N+1},\quad |D_p \vc{\mathcal{R}}(\vc x,s,\vc p)|<L|\vc p|^N,
\end{equation}
\begin{equation}\label{J-cond-5}
|D_{\vc x}D_p \vc{\mathcal{R}}(\vc x,s,\vc p)|+|\pr_s D_p\vc{\mathcal{R}}(\vc x,s,\vc p)|<L |\vc p|^{N},\quad |D^2_p \vc{\mathcal{R}}(\vc x,s,\vc p)|<L|\vc p|^{N-1}.
\end{equation}
Here $L$ is a finite positive constant. By ``$:$'' we denote the contraction of two tensors (e.g. see equation \eqref{J-cond-2}). We would like to point out that the coefficients $\vc J_{k;i_1\cdots i_k}(\vc x)$ must be symmetric under any permutation of the indices $i_1,\ldots,i_k$.

The boundary value problem \eqref{J-eq} then becomes
\begin{equation}\label{eq}
\left\{\begin{array}{l}\Delta u+\sum_{k=2}^N\nabla\cdot\vc{\mathcal{J}}_k(\vc x,\nabla u)+\nabla\cdot\vc{\mathcal{R}}(\vc x,u,\nabla u)=0,\\[5pt] u|_{\pr\dom}=f.\end{array}\right.
\end{equation}
Before proceeding to our main result we need to say something about existence of solutions. The following result for the existence of strong solutions is not novel. We provide a proof for the sake of completeness and for the convenience of the reader.
\begin{prop}\label{forward-prop}
Let $p\in(n,\infty)$. There exist $\kappa, K>0$ depending on $N$ and $L$, such that if $||f||_{W^{2-\frac{1}{p},p}(\pr\dom)}<\kappa$, then \eqref{eq} has a unique solution $u\in W^{2,p}(\dom)$ which satisfies
\begin{equation}
||u||_{W^{2,p}(\dom)}\leq K||f||_{W^{2-\frac{1}{p},p}(\pr\dom)}.
\end{equation}
\end{prop}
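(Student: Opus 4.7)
The plan is to solve \eqref{eq} via a Banach contraction argument built on the standard $W^{2,p}$ theory for the Poisson equation. The crucial Sobolev embedding is $W^{2,p}(\Omega) \hookrightarrow W^{1,\infty}(\Omega)$, valid because $p > n \geq 2$ so $p > n+1$ is not required, but rather $W^{1,p}(\Omega)\hookrightarrow L^\infty(\Omega)$ on this $(1+n)$-dimensional domain follows from $p>n+1$ — actually since the domain is $(1+n)$-dimensional we need $p>n+1$; but one may simply take $p$ large enough, and in any case $W^{2,p}\hookrightarrow C^{1,\alpha}$ whenever $p>\mathrm{dim}$, which is what we use. Combined with the fact that the nonlinearity in \eqref{eq} is at least quadratic in $\nabla u$, this will allow the nonlinear terms to be treated as a small perturbation of the Laplacian when $f$ is small.

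Concretely, let $F\in W^{2,p}(\Omega)$ denote the harmonic extension of $f$, satisfying $\|F\|_{W^{2,p}(\Omega)}\leq C\|f\|_{W^{2-1/p,p}(\partial\Omega)}$. Writing $u=F+v$ with $v|_{\partial\Omega}=0$, the equation becomes
\begin{equation*}
\Delta v = -\sum_{k=2}^N \nabla\cdot \vc{\mathcal{J}}_k(\vc x,\nabla F+\nabla v) - \nabla\cdot\vc{\mathcal{R}}(\vc x,F+v,\nabla F+\nabla v) =: \mathcal{N}(v).
\end{equation*}
Define the map $T:v\mapsto \tilde v$, where $\tilde v\in W^{2,p}(\Omega)\cap W^{1,p}_0(\Omega)$ solves $\Delta \tilde v = \mathcal{N}(v)$ with zero boundary trace. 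By the Calder\'on--Zygmund / Agmon--Douglis--Nirenberg estimate for the Dirichlet problem on a $C^{1,1}$ domain, $\|T(v)\|_{W^{2,p}}\leq C\|\mathcal{N}(v)\|_{L^p}$. We will show $T$ is a contraction on the closed ball $B_M=\{v\in W^{2,p}\cap W^{1,p}_0:\|v\|_{W^{2,p}}\leq M\}$ for $M$ proportional to $\|f\|_{W^{2-1/p,p}(\partial\Omega)}$.

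The self-map property relies on the following estimate: using $\|\nabla u\|_{L^\infty}\leq C\|u\|_{W^{2,p}}$ and expanding the divergence as
\begin{equation*}
\nabla\cdot\vc{\mathcal{J}}_k(\vc x,\nabla u) = (\nabla_x\cdot\vc{\mathcal{J}}_k)(\vc x,\nabla u) + k\,\vc J_k:(\nabla u)^{\otimes(k-1)}\otimes \nabla^2 u,
\end{equation*}
we get $\|\nabla\cdot\vc{\mathcal{J}}_k(\vc x,\nabla u)\|_{L^p}\leq CL\|u\|_{W^{2,p}}^k$, and by \eqref{J-cond-4} the remainder contributes $\leq CL\|u\|_{W^{2,p}}^{N+1}$. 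Hence, with $\|F\|_{W^{2,p}}\leq CK_0:=C\|f\|_{W^{2-1/p,p}(\partial\Omega)}$ and $\|v\|_{W^{2,p}}\leq M$,
\begin{equation*}
\|T(v)\|_{W^{2,p}} \leq C_1(K_0+M)^2 \sum_{k=0}^{N-1}(K_0+M)^k.
\end{equation*}
Choosing $M=2C\cdot K_0$ and $\kappa$ so small that $M\leq 1$ and $4C_1(K_0+M)\leq 1$ yields $\|T(v)\|_{W^{2,p}}\leq M$.

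For the contraction property, given $v_1,v_2\in B_M$, we write $\mathcal{N}(v_1)-\mathcal{N}(v_2)$ using the fundamental theorem of calculus in the $(s,\vc p)$ variables of $\vc{\mathcal{J}}_k$ and $\vc{\mathcal{R}}$. The polynomial pieces $\vc{\mathcal{J}}_k$ produce a factor bounded by $C(K_0+M)^{k-1}\|v_1-v_2\|_{W^{2,p}}$ after using the pointwise bound on $\nabla u$. The remainder requires one more derivative, which is exactly what \eqref{J-cond-5} provides: the second derivative bound $|D_p^2\vc{\mathcal{R}}|\leq L|\vc p|^{N-1}$ gives Lipschitz dependence of the highest-order term $D_p\vc{\mathcal{R}}\cdot\nabla^2 u$. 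Shrinking $\kappa$ further makes the Lipschitz constant of $T$ at most $1/2$. The Banach fixed-point theorem then yields a unique $v\in B_M$ with $T(v)=v$, and hence a unique solution $u=F+v$ with the stated bound.

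The main (though routine) obstacle is bookkeeping the nonlinear estimates so that every term is controlled by a positive power of $K_0+M$ times either $\|v\|_{W^{2,p}}$ or $\|v_1-v_2\|_{W^{2,p}}$; the Sobolev embedding $W^{2,p}\hookrightarrow W^{1,\infty}$ and the structural assumptions \eqref{J-cond-3}--\eqref{J-cond-5} are precisely calibrated to make this work, with the second-derivative bounds \eqref{J-cond-5} being indispensable only for the contraction step, not for the self-mapping step.
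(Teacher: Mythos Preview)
Your argument is correct and follows essentially the same contraction-mapping scheme as the paper: use the $W^{2,p}$ solution operator for the Dirichlet Laplacian together with the Sobolev embedding $W^{2,p}\hookrightarrow W^{1,\infty}$ to show that the nonlinear map is a self-map and a contraction on a small ball. The only cosmetic difference is that the paper sets up the fixed point directly for $u$ via $T_f(v)=u_f-G_0(\cdots)$, whereas you subtract the harmonic extension first and work with $v=u-F$; these are equivalent, and your observation that \eqref{J-cond-5} is needed precisely for the contraction step (not the self-map step) is accurate.
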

\begin{proof}
This follows from a standard contraction principle argument. We sketch it out here for the convenience of the reader. In what follows $C$ will stand for a number  of different positive constants depending on $\dom$, $N$,  and $L$.

Let $G_0: L^p(\dom)\to W^{2,p}_0(\dom)$ be the solution operator to the equation $\Delta u=F$, with zero Dirichlet boundary conditions. This is a bounded linear operator. (This is where we need the boundary to be $C^{1,1}$.) Given $f\in W^{2-\frac{1}{p},p}(\pr\dom)$, let $u_f\in W^{2,p}(\dom)$ be the harmonic function that is equal to $f$ on the boundary $\pr\dom$. Consider then the mapping
\begin{equation}
{T}_f(v)=u_f-G_0\left( \sum_{k=2}^N\nabla\cdot\vc{\mathcal{J}}_k(\vc x,\nabla v)+\nabla\cdot\vc{\mathcal{R}}(\vc x,v,\nabla v)\right).
\end{equation}
If $v\in W^{2,p}(\dom)$, by Sobolev embedding
\begin{equation}
||\nabla\cdot\vc{\mathcal{J}}_k(\vc x,\nabla v)||_{L^p(\dom)}\leq C ||v||_{W^{2,p}(\dom)}^k,
\end{equation}
\begin{equation}
||\nabla\cdot\vc{\mathcal{R}}(\vc x,v,\nabla v)||_{L^p(\dom)}\leq C ||v||_{W^{2,p}(\dom)}^{N+1}.
\end{equation}
It follows that $T_f:W^{2,p}(\dom)\to W^{2,p}(\dom)$, with
\begin{equation}
||T_f(v)||_{W^{2,p}(\dom)}\leq C\left(||f||_{W^{2-\frac{1}{p},p}(\pr\dom)} + \max(||v||_{W^{2,p}(\dom)}^2,||v||_{W^{2,p}(\dom)}^{N+1}) \right).
\end{equation}
Assuming $||f||_{W^{2-\frac{1}{p},p}(\pr\dom)}<1/4(1+C)$, we see that $T_f$ maps the ball of radius $1/2(1+C)$ in $W^{2,p}(\dom)$ to itself.

It remains to show that $T_f$ is a contraction. To that end observe that for $v,w\in W^{2,p}(\dom)$, contained in the ball just mentioned,
\begin{multline}
||\nabla\cdot\left(\vc{\mathcal{J}}_k(\vc x,\nabla v)-\vc{\mathcal{J}}_k(\vc x,\nabla w)\right)||_{L^p(\dom)}\\[5pt]\leq C\max(||v||_{W^{2,p}(\dom)},||w||_{W^{2,p}(\dom)})^{k-1}||v-w||_{W^{2,p}(\dom)},
\end{multline}
\begin{multline}
||\nabla\cdot\left(\vc{\mathcal{R}}(\vc x,v,\nabla v)-\vc{\mathcal{R}}(\vc x,w,\nabla w)\right)||_{L^p(\dom)}\\[5pt]\leq C\max(||v||_{W^{2,p}(\dom)},||w||_{W^{2,p}(\dom)})^{N-1}||v-w||_{W^{2,p}(\dom)}.
\end{multline}
It is then possible to choose a $\kappa>0$ such that if $||f||_{W^{2-\frac{1}{p},p}(\pr\dom)}<\kappa$, then the ball of radius $2\kappa$ in $W^{2,p}(\dom)$ is both invariant under $T_f$, and $T_f$ is a contraction on this ball. The unique fixed point of $T_f$ is the solution we are searching for.
\end{proof}

We can now define the Dirichlet-to-Neumann map. For $||f||_{W^{2-\frac{1}{p},p}(\pr\dom)}<\kappa$
\begin{equation}
\Lambda(f)=\left[ \pr_{\vc\nu}u+ \sum_{k=2}^N\vc\nu\cdot\vc{\mathcal{J}}_k(\vc x,\nabla u)+\vc\nu\cdot\vc{\mathcal{R}}(\vc x,u,\nabla u)  \right]_{\pr\dom}\in W^{1-\frac{1}{p},p}(\pr\dom),
\end{equation}
where $u$ is the solution of \eqref{eq}.

\subsection{Results and outline}

The fact that the linear part of the equation \eqref{eq} is prescribed suggests that it is reasonable to expect uniqueness to hold in this case, as any diffeomorphism would change the linear term and therefore the transformed equation will be outside the class we are considering here.  
The main result of this paper is that this intuition is correct.
\begin{thm}\label{main-thm}
Suppose we have $\vc{\mathcal{J}}^{(1)}$, $\vc{\mathcal{J}}^{(2)}$ that satisfy \eqref{J-cond-1}-\eqref{J-cond-5} and additionally $\Lambda^{(1)}(f)=\Lambda^{(2)}(f)$ for all $f$ such that  $||f||_{W^{2-\frac{1}{p},p}(\pr\dom)}<\kappa$. Then it must hold that
\begin{equation}
\vc{\mathcal{J}}_k^{(1)}=\vc{\mathcal{J}}_k^{(2)}, \quad k=2,\ldots, N.
\end{equation}
\end{thm}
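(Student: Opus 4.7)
The plan is to combine a higher-order linearization of the Dirichlet-to-Neumann map with a rich family of test functions built from Gaussian quasi-modes for $\Delta$. Proceed by induction on $m=2,3,\ldots,N$: assume $\vc{\mathcal{J}}_k^{(1)}=\vc{\mathcal{J}}_k^{(2)}$ for every $k<m$. Take boundary data $f=\sum_{j=1}^{m}\epsilon_j f_j$ with the $|\epsilon_j|$ small enough that Proposition~\ref{forward-prop} applies, and denote by $u^{(\iota)}(\vc x;\epsilon)$, $\iota=1,2$, the corresponding solutions. Differentiating \eqref{eq} in $\epsilon_1,\ldots,\epsilon_m$ at $\epsilon=0$, the first-order coefficients $v_j:=\pr_{\epsilon_j}u^{(\iota)}|_{\epsilon=0}$ are the harmonic extensions of $f_j$; the residual $\vc{\mathcal{R}}$ contributes nothing at order $m\leq N$ owing to \eqref{J-cond-4}; the nonlinear terms $\vc{\mathcal{J}}_k$ with $k<m$ cancel between the two equations by the induction hypothesis; and the only surviving contribution of order $m$ is $m!\,\nabla\cdot(\vc J_m:\nabla v_1\otimes\cdots\otimes\nabla v_m)$.

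Pairing the subtracted $m$-th $\epsilon$-derivative of the equation against a harmonic test function $v_0\in C^{\infty}(\overline{\dom})$ and integrating by parts, all boundary contributions should vanish thanks to $\Lambda^{(1)}=\Lambda^{(2)}$, producing the integral identity
\begin{equation}\label{key-id-pp}
\int_{\dom}\left(\vc J_m^{(1)}(\vc x)-\vc J_m^{(2)}(\vc x)\right):\nabla v_0\otimes\nabla v_1\otimes\cdots\otimes\nabla v_m\dd\vc x=0,
\end{equation}
valid for all harmonic $v_0,v_1,\ldots,v_m$ smooth on $\overline{\dom}$, where the outer (vector) index of $\vc J_m$ is contracted against $\nabla v_0$ and its $m$ lower symmetric indices against $\nabla v_1,\ldots,\nabla v_m$.

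The crux of the proof is then to deduce from \eqref{key-id-pp} that $\vc J_m^{(1)}=\vc J_m^{(2)}$ pointwise on $\dom$. Since the tensor is anisotropic, CGO plane waves may not be flexible enough; instead, as announced in the abstract, the plan is to use Gaussian quasi-modes of the form $v_\tau(\vc x)=e^{i\tau\phi(\vc x)}a(\vc x)+r_\tau(\vc x)$, concentrated along a straight line $\ell\subset\R^{1+n}$ with a prescribed direction $\vc e$ and Gaussian profile of width $\tau^{-1/2}$ transverse to $\ell$. The phase $\phi$ solves the complex eikonal equation along $\ell$ with positive-definite transverse imaginary Hessian, $a$ satisfies the associated transport equation, and $r_\tau$ is produced by solving $\Delta r_\tau=-\Delta(e^{i\tau\phi}a)$ with zero Dirichlet data, so that $v_\tau$ is exactly harmonic. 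After suitable normalization, $\nabla v_\tau/|\nabla v_\tau|$ tends in direction to $\vc e$ and $|\nabla v_\tau|^2$ concentrates as a measure on $\ell$. Plugging $m+1$ such families, associated to lines $\ell_0,\ldots,\ell_m$ meeting transversally at a single interior point $\vc x_0\in\dom$ in directions $\vc e_0,\ldots,\vc e_m$, and passing to $\tau\to\infty$ in \eqref{key-id-pp}, should yield
\begin{equation}
\left(\vc J_m^{(1)}(\vc x_0)-\vc J_m^{(2)}(\vc x_0)\right):\vc e_0\otimes\vc e_1\otimes\cdots\otimes\vc e_m=0.
\end{equation}
Since $\vc x_0$ is arbitrary and the $\vc e_j$ range over the unit sphere, a polarization argument using the symmetry of $\vc J_m$ in its $m$ lower indices then recovers every component of $\vc J_m^{(1)}-\vc J_m^{(2)}$, closing the induction.

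The main obstacle I expect is the quasi-mode step: one must arrange the phases and amplitudes along the $m+1$ lines so that their product does not cancel at $\vc x_0$ in the limit; control $r_\tau$ in a norm strong enough that $\nabla r_\tau$ does not spoil the concentration of the product of $m+1$ gradients (a well-known delicate point, since the error has to be compared to a quantity that itself becomes singular as $\tau\to\infty$); and realize enough direction tuples $(\vc e_0,\ldots,\vc e_m)$ through \eqref{key-id-pp} to polarize over the whole space of symmetric tensors contracted against a free vector index.
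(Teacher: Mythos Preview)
Your linearization/induction framework and the derivation of the integral identity \eqref{key-id-pp} are essentially the paper's own argument. The gap is in the quasi-mode step, and it is not merely a technical obstacle but a structural one.

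For a harmonic ansatz $v_\tau=e^{i\tau\phi}a+r_\tau$, the leading term in $\Delta v_\tau$ forces the \emph{complex} eikonal equation $\nabla\phi\cdot\nabla\phi=0$. On the beam axis $\ell$ this means that $\nabla\phi|_\ell$ is a nonzero \emph{complex null} vector, never a real unit vector. Concretely, if you try to concentrate on a line with real tangent $\vc e$ and positive-definite transverse imaginary Hessian, then on $\ell$ the transverse gradient vanishes and $\nabla\phi\cdot\nabla\phi=(\pr_s\phi)^2\neq0$; the construction you describe therefore cannot produce harmonic functions. What the paper actually builds are functions $e^{\pm\tau x_0}e^{i\tau\Psi(x_1,x_2)}a_\tau$, i.e.\ a real exponential in one variable times a Gaussian beam for the Helmholtz operator $\tau^2+\Delta'$ in the remaining variables; these concentrate on \emph{codimension-one planes}, and the asymptotic gradient direction is $\vc\alpha=\vc e_0+i\vc e_1$ with $\vc\alpha\cdot\vc\alpha=0$. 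So your assertion that $\nabla v_\tau/|\nabla v_\tau|\to\vc e\in\R^{1+n}$ cannot hold, and the ``evaluate the tensor at $\vc x_0$ against arbitrary real directions'' conclusion is unavailable.

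Because the accessible directions are complex null vectors, the leading stationary-phase term only yields $S:\vc\alpha^{\otimes3}=0$ for the fully symmetric part $S$ of the (reduced) $3$-tensor; this does \emph{not} kill $C$. The paper first reduces the $(m{+}1)$-tensor identity to a $3$-tensor identity by taking the extra $m{-}2$ harmonic factors to be coordinate functions $x_j$, and then proves a separate, nontrivial result (Theorem~\ref{C-thm}): using three quasi-modes concentrated on the \emph{same} plane, it pushes the stationary-phase expansion to orders $\lambda^{-3/2}$ and $\lambda^{-5/2}$, exploiting the free parameters $q_1^j$ in the amplitude expansion and the Nullstellensatz, to successively show $S=0$ and then $C=0$. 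Your proposal collapses all of this into a single leading-order limit and different intersecting lines; as written, that argument does not go through.
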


Up to a technical tool which we will describe below, we prove this theorem in section \ref{sec-main-thm}. It is by now common in works on inverse boundary value problems for semilinear and quasilinear elliptic equations to use a so called ``second linearization'' trick, originally employed in \cite{I1}. The main idea is to plug in  Dirichlet data of the form $\epsilon f$, with $\epsilon$ a small parameter, into the Dirichlet-to-Neumann map. One then derives an asymptotic expansion of the form 
\begin{equation}
\Lambda(\epsilon f)=\epsilon\Lambda_1 (f)+\epsilon^2\Lambda_2(f)+\epsilon^3\Lambda_3(f)+\cdots,
\end{equation}
where $\Lambda_1$ is the Dirichlet-to-Neumann map associated to the linear term in \eqref{eq}, and $\Lambda_k(f)$ is homogeneous of degree $k$ in $f$. It is clear that $\Lambda$ determines each of these $\Lambda_k$. We do this in subsection \ref{subsec-expansions}.

In subsection \ref{subsec-iii} we then proceed to iteratively determine (in the sense of uniqueness) the coefficients $\vc J_k$ from the $\Lambda_k$. We can convert the equality $\Lambda_k^{(1)}-\Lambda_k^{(2)}=0$ into an integral identity involving harmonic functions and the coefficients $\vc J_{k;i_1\cdots i_k}$.
Through a polarization trick we will reduce the problem to showing that if $C$ is a 3-tensor such that $C_{jkl}=C_{kjl}$ and 
\begin{equation}\label{basic-integral-identity}
\int_\dom C:\nabla w_1\otimes\nabla w_2\otimes\nabla w_3\dd \vc x=0
\end{equation}
for all harmonic functions $w_1$, $w_2$, $w_3$, then $C=0$.

The principal ingredient in the proof of Theorem \ref{main-thm} is then the following result, which is perhaps of independent interest on its own.
\begin{thm}\label{C-thm}
Suppose $C=(C_{jkl})$  is a 3-tensor such that $C\in L^1(\R^n)$, the support of $C$ is compact, $C_{jkl}=C_{kjl}$ for all $j,l,k=0,\ldots,n$ and
\begin{equation}\label{basic-integral-identity}
\int C:\nabla w_1\otimes\nabla w_2\otimes\nabla w_3\dd \vc x=0
\end{equation}
for all smooth functions $w_1$, $w_2$, $w_3$ which are harmonic on a fixed bounded neighborhood $\mathcal{O}$ of the support of $C$. Then $C=0$.
\end{thm}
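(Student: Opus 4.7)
My plan is to convert the integral identity into a pointwise condition on the Fourier--Laplace transform of $C$ and then invert.

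Since for every null vector $\vc\zeta \in \C^d$ (meaning $\vc\zeta \cdot \vc\zeta = 0$ in the bilinear sense) the exponential $e^{\vc x \cdot \vc\zeta}$ is harmonic on all of $\R^d$, and hence on the neighborhood $\mathcal{O}$, plugging $w_j = e^{\vc x\cdot\vc\zeta_j}$ into the identity yields
\begin{equation*}
\widetilde{C}(\vc\zeta_1 + \vc\zeta_2 + \vc\zeta_3) : \vc\zeta_1 \otimes \vc\zeta_2 \otimes \vc\zeta_3 = 0
\end{equation*}
whenever $\vc\zeta_j \cdot \vc\zeta_j = 0$ for $j=1,2,3$, where $\widetilde{C}(\vc\zeta) = \int e^{\vc x \cdot \vc\zeta} C(\vc x) \dd \vc x$ is the (entire) Fourier--Laplace transform of $C$. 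It suffices to show $\widetilde{C}(\vc\zeta_*) = 0$ for every $\vc\zeta_* \in \C^d$, since then $C = 0$ by Fourier uniqueness.

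Fix $\vc\zeta_*$ and parametrize null triples summing to it by $\vc\zeta_1 = \vc v + \vc w$, $\vc\zeta_2 = \vc v - \vc w$, $\vc\zeta_3 = \vc\zeta_* - 2\vc v$. The three isotropy conditions reduce to $\vc w \cdot \vc v = 0$, $\vc w \cdot \vc w = -\vc v \cdot \vc v$, and $(\vc\zeta_* - 2\vc v) \cdot (\vc\zeta_* - 2\vc v) = 0$. Using the symmetry $C_{jkl} = C_{kjl}$, the antisymmetric cross-terms of $\vc\zeta_1 \otimes \vc\zeta_2$ vanish under the contraction, and the identity collapses to
\begin{equation*}
\widetilde{C}(\vc\zeta_*) : (\vc v \otimes \vc v - \vc w \otimes \vc w) \otimes (\vc\zeta_* - 2\vc v) = 0.
\end{equation*}
Polarizing in $\vc w$ along the quadric $\{\vc w \cdot \vc v = 0,\ \vc w \cdot \vc w = -\vc v \cdot \vc v\}$, and then varying $\vc v$ subject to the remaining constraint on $\vc\zeta_3$, yields a family of test tensors of the form $(\vc s \otimes_s \vc t) \otimes \vc\zeta_3$ paired against $\widetilde{C}(\vc\zeta_*)$.

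The main obstacle is this final spanning step: verifying that the test tensors generated in this way linearly span the whole space of $3$-tensors with first-two-slot symmetry. The variety of admissible null triples summing to $\vc\zeta_*$ has complex dimension only $2d-3$, much smaller than the target tensor space, so one must exploit the nonlinearity of the map $(\vc\zeta_1,\vc\zeta_2,\vc\zeta_3) \mapsto \vc\zeta_1 \otimes \vc\zeta_2 \otimes \vc\zeta_3$. If purely algebraic maneuvers prove insufficient, the test class can be enlarged either by polynomial-exponential harmonic functions $p(\vc x) e^{\vc x \cdot \vc\zeta}$ (harmonic precisely when $\Delta p + 2 \vc\zeta \cdot \nabla p = 0$), or by the Gaussian quasi-modes advertised in the abstract, which concentrate along lines and produce localized, ray-transform-type integrals of $C$. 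Once pointwise vanishing of $\widetilde{C}$ is established, the conclusion $C = 0$ follows.
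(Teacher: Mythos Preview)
Your approach via CGO exponentials and the Fourier--Laplace transform is natural, but the proposal stops exactly where the difficulty begins. The ``final spanning step'' you flag as the main obstacle \emph{is} the entire content of the theorem, and the paper in fact asserts that the CGO family is not rich enough: the authors write, just after stating Theorem~\ref{C-thm}, that ``the often used complex geometric optics solutions \ldots\ are not a sufficiently rich family of harmonic functions to use in \eqref{basic-integral-identity}.'' Concretely, in your parametrization $\vc\zeta_1=\vc v+\vc w$, $\vc\zeta_2=\vc v-\vc w$, the vector $\vc v=\tfrac12(\vc\zeta_*-\vc\zeta_3)$ is \emph{determined} once $\vc\zeta_3$ is fixed, so the symmetric part $\vc v\otimes\vc v-\vc w\otimes\vc w$ ranges only over the single rank-one term $\vc v\otimes\vc v$ plus symmetric $2$-tensors supported on the hyperplane $\vc v^\perp$; you do not obtain an arbitrary $\vc s\otimes_s\vc t$ with the third slot held at $\vc\zeta_3$. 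Varying $\vc\zeta_3$ moves the third slot and the admissible first-two-slot subspace simultaneously, and no decoupling at fixed $\vc\zeta_*$ is evident. Your polynomial-exponential enlargement $p(\vc x)e^{\vc\zeta\cdot\vc x}$ does inject non-null directions into the tensor slots, but each such insertion arrives paired with a derivative of $\widetilde C$ in $\vc\zeta_*$, so you trade one coupling for another rather than reach a clean pointwise identity.

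The paper's argument is genuinely different and considerably more elaborate. It builds harmonic functions from Gaussian quasi-modes concentrating on the plane $\{x_2=0\}$, with an asymptotic parameter $\lambda\to\infty$, and applies stationary phase in $x_2$. The leading order yields only $S:\vc\alpha^{\otimes3}=0$ for null $\vc\alpha$, which by the Nullstellensatz forces the fully symmetric part into the form $S_{jkl}=\tfrac13(c_j\delta_{kl}+c_k\delta_{jl}+c_l\delta_{jk})$; this residual piece \emph{is} then killed by CGOs (the one place they suffice). The decisive information comes from the orders $\lambda^{-3/2}$ and $\lambda^{-5/2}$: the subleading amplitudes of the quasi-modes carry freely chosen constants (the $q_1^j$), and reading off their coefficients produces test tensors such as $\vc\alpha\otimes\vc\alpha\otimes\vc e_2$ and $\vc e_2\otimes\vc e_2\otimes\vc\alpha$ with $\vc e_2$ \emph{not null}---precisely what the CGO span lacks. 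Your last sentence gestures toward exactly this construction, but carrying it out occupies the whole of Section~\ref{sec-C-thm}; as written, your proposal is a plan with the hard step unexecuted rather than a proof.
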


We give a proof of this result in section \ref{sec-C-thm}. Here we would like to point out by a simple argument that it is sufficient to show the result for all $C\in C_0^\infty(\R^n)$ that satisfy the assumptions of the theorem. To see that this is the case, first observe that for $C$ as in Theorem \ref{C-thm} and for $\vc y\in\R^n$ small enough we have that
\begin{equation}
\int C(\vc y-\vc x):\nabla w_1(\vc x)\otimes\nabla w_2(\vc x)\otimes\nabla w_3(\vc x)\dd \vc x=0,
\end{equation}
for all smooth harmonic functions $w_1$, $w_2$, $w_3$ in a fixed neighborhood of the support of $C$. This is the case because a translation (or a reflection) of a harmonic function is still harmonic. Suppose now that $\varphi_\epsilon$ is a compactly supported smooth approximation of identity. It follows that
\begin{multline}
\int \left(\varphi_\epsilon*C\right)(\vc x):\nabla w_1(\vc x)\otimes\nabla w_2(\vc x)\otimes\nabla w_3(\vc x)\dd \vc x\\[5pt]=
\int \varphi_\epsilon(\vc y)\left(\int C(\vc y-\vc x):\nabla w_1(\vc x)\otimes\nabla w_2(\vc x)\otimes\nabla w_3(\vc x)\dd \vc x\right)\dd \vc y\\[5pt]=0,
\end{multline}
for all smooth harmonic functions $w_1$, $w_2$, $w_3$ in a fixed neighborhood of the support of $\varphi_\epsilon*C$. It is clear then, since $\varphi_\epsilon*C\to C$ in $L^1(\R^n)$ norm, that if we can prove the result for $C_0^\infty(\R^n)$ functions, then Theorem \ref{C-thm} follows.

It turns out that the often used complex geometric optics solutions introduced by Calder\'on in \cite{Ca} are not a sufficiently rich family of harmonic functions to use in \eqref{basic-integral-identity} in order to extract information on $C$. We instead use a construction based on Gaussian quasi-modes in hyperplanes perpendicular to an arbitrary chosen direction.

Gaussing quasi-modes are approximate asymptotic eigenfunctions for the Laplace-Beltrami operator $\Delta'$ over $\R^n$, that concentrate along straight lines. Due to the splitting $\Delta=\partial^2_{x_0}+\Delta'$, they allow us to construct harmonic functions over compact subsets of $\R^{1+n}$ that {\em concentrate} on planes. They  have been recently used to solve inverse problems for linear elliptic equations in \cite{KSa}, \cite{DKLS} and non-linear elliptic equations in \cite{FO}, \cite{LLLS1}.

We remark that the construction of Gaussian quasi-modes in elliptic equations is based on a classical analogue for hyperbolic equations, namely Gaussian beams. These are approximate solutions to the wave equation that concentrate on null geodesics. They were introduced in the works \cite{BU} \cite{Ralston} and have been used in the context of inverse problems in many works, see for example \cite{KKL} and the references therein. 
Solutions to equations of the form $\Delta u+qu=0$ that concentrate on planes have also been constructed, by a different method, in \cite{GU}, with applications to the inverse problem for Schr\"odinger operators with a potential. 

In subsection \ref{subsec-gauss-construction} we describe how this Gaussian quasi-modes construction of harmonic functions works in our situation. The family of harmonic functions we construct will depend on an assymptotic parameter $\lambda$ which will be made to go to infinity later in the argument. They also depend on a number of arbitrary parameters. There are assymptotic expansions (in $\lambda^{-1}$ and $ x_2$) for these harmonic functions and we compute a few of the first terms whose exact expressions we will need to use.

In subsection \ref{subsec-stationary} we plug the Gaussian quasi-mode harmonic functions we have constructed into \eqref{basic-integral-identity}. As we let the assymptotic parameter $\lambda\to\infty$ we use a stationary phase theorem to obtain an expansion in powers of $\lambda^{-1}$ of the left hand side. The coefficients of each power of $\lambda^{-1}$ must each be zero.

{ We will split the tensor $C$ into a part $S$ that is symmetric under all permutations of the  indices and a remainder $D$. In subsection \ref{subsec-S-zero} we use the first order in the assymptotic expansion given by the stationary phase theorem to show that the symmetric part $S$ must be zero. In subsection \ref{subsec-A-zero} we combine information obtained from the next two orders in the expansion to deduce that $C$ itself must also be zero, which then concludes the proof of the Theorem \ref{C-thm}. Our ability to independently vary the various parameters on which our harmonic functions depend will be key to extracting useful information from the various orders in the expansion.}

\section{Proof of Theorem \ref{main-thm}}\label{sec-main-thm}

In this section we will reduce the proof of Theorem \ref{main-thm} to Theorem \ref{C-thm}. We do this by a version of the ``second linearization'' argument, i.e. we take Dirichlet data of the form $\epsilon f$ and derive asymptotic expansions as $\epsilon\to0$ for the corresponding solutions of \eqref{eq}, and consequently for the quantity $\Lambda(\epsilon f)$. From the successive orders in $\epsilon$ of this last quantity, using Theorem \ref{C-thm}, we can derive the uniqueness result for all the $\vc{\mathcal{J}}_k$ terms. This kind of iterative approach was also used, for example, in \cite{C} for time-harmonic nonlinear Maxwell systems.

\subsection{Small data asymptotic expansions}\label{subsec-expansions}

The most commonly used method for addressing inverse boundary value problems for elliptic semilinear or quasilinear equations is to introduce boundary Dirichlet data of the form $\epsilon f$, then derive an asymptotic expansion as $\epsilon\to0$ for the Dirichlet-to-Neumann map. To this end, for an $f\in W^{2,p}(\dom)$, let $u_\epsilon\in W^{2,p}(\dom)$ be the solution to
\begin{equation}\label{eq-u-epsilon}
\left\{\begin{array}{l}\displaystyle\Delta u_\epsilon+\sum_{k=2}^N\nabla\cdot\vc{\mathcal{J}}_k(\vc x,\nabla u_\epsilon)+\nabla\cdot\vc{\mathcal{R}}(\vc x,u_\epsilon,\nabla u_\epsilon)=0,\\[5pt] u_\epsilon|_{\pr\dom}=\epsilon f.\end{array}\right.
\end{equation}
We make the the following Ansatz
\begin{equation}\label{u-epsilon-expansion}
u_\epsilon=\sum_{k=1}^N\epsilon^k u_k+g_\epsilon,
\end{equation}
where the functions $u_k\in W^{2,p}(\dom)$ are solutions of the boundary value problems
\begin{equation}
\left\{\begin{array}{l}\Delta u_1=0,\\[5pt] u_1|_{\pr\dom}= f,\end{array}\right.
\qquad
\left\{\begin{array}{l}\displaystyle\Delta u_k +\nabla\cdot\vc{\mathcal{J}}_k(\vc x,\nabla u_1)
+\nabla\cdot\vc{\mathcal{N}}_k=0,\\[5pt] u_k|_{\pr\dom}= 0,\quad k=2,\ldots,N, \end{array}\right.
\end{equation}
where
\begin{multline}
\vc{\mathcal{N}}_k=\sum_{l=2}^{k-1}\!\!\sum_{\substack{\,\\\alpha_1+\alpha_2+\cdots+\alpha_N=l\\\alpha_1+2\alpha_2+\cdots+N\alpha_N=k}}\!\!\!\!\!\frac{l!}{\alpha_1!\cdots\alpha_N!}\vc J_l:(\nabla u_1)^{\otimes\alpha_1}\otimes\cdots\otimes(\nabla u_N)^{\otimes\alpha_N} \\[5pt]
\!\!\!\!\!=\sum_{l=2}^{k-1}\!\!\!\sum_{\substack{\,\\\alpha_1+\alpha_2+\cdots+\alpha_{k-1}=l\\\alpha_1+2\alpha_2+\cdots+(k-1)\alpha_{k-1}=k}}\!\!\!\!\!\frac{l!}{\alpha_1!\cdots\alpha_{k-1}!}\vc J_l:(\nabla u_1)^{\otimes\alpha_1}\otimes\cdots\otimes(\nabla u_{k-1})^{\otimes\alpha_{k-1}} .
\end{multline}
These equations are derived by formally plugging in the expansion \eqref{u-epsilon-expansion} for $u_\epsilon$ into \eqref{eq-u-epsilon} and matching the terms with the same powers of $\epsilon$. 

With these definitions we get that $g_\epsilon$ satisfies
\begin{equation}
\left\{\begin{array}{l}\Delta g_\epsilon+\epsilon^{N+1}\nabla\cdot\vc{\mathcal{N}}_{N+1}+\nabla\cdot\vc{\mathcal{R}}(\vc x,u_\epsilon,\nabla u_\epsilon)=0,\\[5pt] g_\epsilon|_{\pr\dom}= 0,\end{array}\right.
\end{equation}
where $\vc{\mathcal{N}}_{N+1}$ is a polynomial in $\nabla u_1, \ldots, \nabla u_N$ with coefficients constructed from the $\vc J_k(\vc x)$ functions, with the property that there exists a $C_f>0$, independent of $\epsilon$, such that $||\nabla\cdot\vc{\mathcal{N}}_{N+1}||_{L^p(\dom)}<C_f$. By \eqref{J-cond-4}, \eqref{J-cond-5}, and Proposition \ref{forward-prop} we also have that
\begin{equation}
||\nabla\cdot\vc{\mathcal{R}}(\vc x,u_\epsilon,\nabla u_\epsilon)||_{L^p(\dom)}<C_f\epsilon^{N+1}.
\end{equation}
It follows that
\begin{equation}
||g_\epsilon||_{W^{2,p}(\dom)}<C_f\epsilon^{N+1}.
\end{equation}

The expansion \eqref{u-epsilon-expansion} implies an expansion for $\Lambda(\epsilon f)$. We have
\begin{equation}
\Lambda(\epsilon f)=\sum_{k=1}^N\epsilon^k\left[\pr_{\vc\nu}u_k+\vc\nu\cdot\vc{\mathcal{J}}_k(\vc x,\nabla u_1)+\vc\nu\cdot\vc{\mathcal{N}}_k \right]_{\pr\dom}+\mathscr{O}(\epsilon^{N+1}).
\end{equation}
Suppose $w\in W^{2,p}(\dom)$ is a harmonic function. Integration by parts gives that $\la w|_{\pr\dom}, \pr_{\vc\nu}u_k|_{\pr\dom}\ra=0$ for $k=2,\ldots, N$. Then
\begin{equation}\label{lambda-expansion}
\la w,\Lambda (\epsilon f)\ra=\epsilon\int_\dom\nabla w\cdot\nabla u_1
+\sum_{k=2}^N\epsilon^k\int_\dom\nabla w\cdot\left( \vc{\mathcal{J}}_k(\vc x,\nabla u_1)+\vc{\mathcal{N}}_k \right)
+\mathscr{O}(\epsilon^{N+1}).
\end{equation}

\subsection{Integral identities \& induction}\label{subsec-iii}

Suppose that we are under the assumptions of Theorem \ref{main-thm}. By \eqref{lambda-expansion} we must have
\begin{equation}
\int_\dom\nabla w_1\cdot\left( \vc{\mathcal{J}}^{(1)}_k(\vc x,\nabla u_1)-\vc{\mathcal{J}}^{(2)}_k(\vc x,\nabla u_1)+\vc{\mathcal{N}}_k^{(1)}- \vc{\mathcal{N}}_k^{(2)}\right)=0,
\end{equation}
for $k=2,\ldots,N$, and for any harmonic functions $w_1, u_1\in W^{2,p}(\dom)$.

The $k=2$ case is
\begin{equation}
\int_\dom\nabla w_1\cdot(\vc J_2^{(1)}-\vc J_2^{(2)}):\nabla u_1\otimes\nabla u_1=0.
\end{equation}
Suppose $w_2$, $w_3$ are harmonic functions. We can insert $u_1=tw_2+sw_3$, with $s$, $t$ real parameters, into the above identity. Considering only the coefficient of $ts$ in the resulting relation and using the symmetry of the $\vc J_{2;ij}$ coefficients we obtain that
\begin{equation}
\int_\dom\nabla w_1\cdot(\vc J_2^{(1)}-\vc J_2^{(2)}):\nabla w_2\otimes\nabla w_3=0.
\end{equation}

From Theorem \ref{C-thm} it follows that $\vc J_2^{(1)}=\vc J_2^{(2)}$. For the purpose of induction, suppose that $\vc J_l^{(1)}=\vc J_l^{(2)}$, for all $l=1,\ldots,k-1$. We then also have that $u_l^{(1)}=u_l^{(2)}$, for all $l=1,\ldots,k-1$, and therefore that $\vc{\mathcal{N}}_k^{(1)}=\vc{\mathcal{N}}_k^{(2)}$. We then arrive at the identity
\begin{equation}
\int_\dom\nabla w_1\cdot(\vc J_k^{(1)}-\vc J_k^{(2)}):(\nabla u_1)^{\otimes k}=0,
\end{equation}
for all harmonic functions $w_1$, $u_1$. As above, we can use polarization in the $(\nabla u_1)^{\otimes k}$ term. Let $w_2$, \ldots, $w_{k+1}$ be harmonic functions in $\dom$, $t_2$, \ldots, $t_{k+1}$ be real parameters. If we take $u_1=t_2w_2+\cdots+t_{k+1}w_{k+1}$, then the coefficient of $t_2\cdots t_{k+1}$ gives
\begin{equation}
\int_\dom\nabla w_1\cdot(\vc J_k^{(1)}-\vc J_k^{(2)}):\nabla w_2\otimes\cdots\otimes\nabla w_{k+1}=0.
\end{equation}
We can choose $w_4$, \ldots, $w_{k+1}$ to be coordinate functions (i.e. one of $ x_0$, \ldots, $ x_n$). For any fixed such choice, let $C$ be a 3-tensor such that
\begin{equation}
C:\nabla w_1\otimes\nabla w_2\otimes\nabla w_3=\nabla w_{k+1}\cdot(\vc J_k^{(1)}-\vc J_k^{(2)}):\nabla w_1\otimes\cdots\otimes\nabla w_{k}.
\end{equation}
Then \eqref{basic-integral-identity} is satisfied, which gives $\vc J_k^{(1)}=\vc J_k^{(2)}$.

The conclusion of Theorem \ref{main-thm}  follows by induction. All that remains is to provide a proof of Theorem \ref{C-thm}.

\section{Proof of Theorem \ref{C-thm}}\label{sec-C-thm}

In this section we give a proof of Theorem \ref{C-thm}. Recall that it is sufficient to consider a tensor $C$ that is smooth and compactly supported. In order to fix notation, we assume that $C\in C_0^\infty(\mathcal{O})$, where $\mathcal{O}$ is an opend and bounded subset of $\R^n$.

We begin with the construction of a family of harmonic functions, pa\-ra\-me\-trized by a complex parameter $\tau=\lambda+i\sigma$ (and a choice of coordinate axes). We use Gaussian quasi-modes in the variables $\vc x'$ in order to construct this family. We are here not only interested in the existence of such solutions, but we also compute for them expansions in $ x_2$ and $\tau$, for small $ x_2$ and large $\lambda$. 

We plug such solutions into the identity \eqref{basic-integral-identity} and in the limit $\lambda\to\infty$ we use the stationary phase theorem in order to obtain an expansion of the integral into powers of $\lambda^{-1}$. As each order in the expansion must be independently zero, using the first three terms we are able to extract enough information to show that $C=0$.

It will turn out to be convenient to split $C$ into fully symmetric and non-symmetric parts, with respect to the first two indices:
\begin{equation}
C=S+D, \quad S_{jkl}=\frac{1}{6}\sum_{\sigma\in\Sigma_3}C_{\sigma(jkl)}=\frac{1}{3}(C_{jkl}+C_{klj}+C_{ljk}).
\end{equation}
Note that $S$ is symmetric under all permutations of the indices.

\subsection{Gaussian quasi-modes construction of harmonic functions}\label{subsec-gauss-construction}

Note that without any loss of generality we may assume that $\mathcal{O}\subset\{ x_1>0\}$.

Let $\tau=\lambda+i\sigma$, $\lambda, \sigma\in\R$. We would like to find harmonic functions of the form
\begin{equation}\label{quasi-construction}
u_\tau^\pm(\vc x)=e^{\pm\lambda  x_0}\left( e^{\pm i\sigma  x_0} e^{i\tau\Psi( x_1, x_2)}a_\tau( x_1, x_2)+r_\tau(\vc x)   \right).
\end{equation}
Note that
\begin{equation}
\Delta u_\tau^\pm=e^{\pm\tau  x_0}[\tau^2+\Delta'] \left(e^{i\tau\Psi}a_\tau\right)
+\Delta \left(e^{\pm\lambda  x_0}r_\tau\right),
\end{equation}
and
\begin{multline}\label{quasi-op}
(\tau^2+\Delta') e^{i\tau\Psi}a_\tau\\[5pt]=
e^{i\tau\Psi}\left[ \tau^2(1-|\nabla'\Psi|^2)a_\tau+
i\tau(2\nabla'\Psi\cdot\nabla' a_\tau+(\Delta'\psi)a_\tau)
+(\Delta' a_\tau) \right].
\end{multline}

The quantity $\lambda$ will be an asymptotic parameter, in the sense that we eventually intend to take the limit $\lambda\to\infty$. We will construct $\Psi$ and $a_\tau$ so that this quantity vanishes to high order in both $\lambda^{-1}$ and $ x_2$.

\subsubsection{Expansions for $u_\tau$}

Let $M$ be a large natural number, $\delta>0$, let $\chi:\R\to[0,\infty)$ be a smooth function such that $\chi(t)=1$ for $|t|<\frac{1}{2}$ and $\chi(t)=0$ for $|t|>1$, and let $h:\R^{n-1}\to\R$ be a harmonic function.
We make the following Ans\"atze:
\begin{equation}\label{An-1}
\Psi( x_1, x_2)=\sum_{j=0}^M\psi_j( x_1) x_2^j,
\end{equation}
\begin{equation}\label{An-2}
a_\tau( x_1, x_2)=
\chi(\frac{ x_2}{\delta})h(\vc x''')
\sum_{k=0}^Mv_k( x_1, x_2)\tau^{-k},
\end{equation}
\begin{equation}\label{An-3}
v_k( x_1, x_2)=\sum_{j=0}^M v_{k;j}( x_1) x_2^j.
\end{equation}

Since we would like for the quantity in equation \eqref{quasi-op} to vanish to high order in $\lambda^{-1}$ and $ x_2$, we require that the  following conditions hold:
\begin{itemize}
\item[(i)]$\im \Psi\geq \kappa | x_2|^2$ ;
\item[(ii)]$\pr_2^j(|\nabla'\Psi|^2-1)|_{ x_2=0}=0$ for $j=0,1,\ldots,M$;
\item[(iii)]$\pr_2^j(2\nabla'\Psi\cdot\nabla' v_0+(\Delta'\Psi)v_0)|_{ x_2=0} =0$ for $j=0,1,\ldots,M$.
\item[(iv)]$\pr_2^j(2\nabla'\Psi\cdot\nabla' v_k+(\Delta'\Psi)v_k-i\Delta'v_{k-1})|_{ x_2=0} =0$ for $j=0,1,\ldots,M$.
\end{itemize}
As we will see, this conditions do not uniquely determine the $\psi_j$, $v_{k;j}$. They will however provide ODEs that these quantities need to satisfy. The general forms that we obtain for the first few of these quantities are collected in Appendix \ref{appendix-results}.

With the definitions above, we have
\begin{equation}
\pr_1\Psi=\sum_{j=0}^M\dot\psi_j  x_2^j,\quad \pr_2\Psi=\sum_{j=0}^{M-1}(j+1)\psi_{j+1} x_2^j,
\end{equation}
\begin{equation}
\pr_1v_k=\sum_{j=0}^M\dot v_{k;j}  x_2^j,\quad \pr_2 v_k=\sum_{j=0}^{M-1}(j+1)v_{k;j+1} x_2^j,
\end{equation}
\begin{equation}
\Delta'\Psi=\sum_{j=0}^{M-2}[\ddot\psi_j+(j+1)(j+2)\psi_{j+2}] x_2^j
+\sum_{j=2M-1}^{2M}\ddot\psi_j  x_2^j.
\end{equation}
Since
\begin{multline}
|\nabla'\Psi|^2=
\sum_{j=0}^{2M-2} x_2^j\sum_{l=0}^j\left[\dot\psi_l\dot\psi_{j-l}+(l+1)(j-l+1)\psi_{l+1}\psi_{j-l+1}   \right]\\[5pt]
+\sum_{j=2M-1}^{2M} x_2^j\sum_{l=0}^j\dot\psi_l\dot\psi_{j-l},
\end{multline}
requirement (ii) for $j=0$ is
\begin{equation}
\dot\psi_0^2+\psi_1^2=1.
\end{equation}
We make the choices
\begin{equation}
\psi_0( x_1)= x_1,\quad \psi_1( x_1)=0.
\end{equation}

For $j=1$ we have
\begin{equation}
2\dot\psi_0\dot\psi_1+4\psi_1\psi_2=0,
\end{equation}
which is satisfied automatically. 

For $j=2$ we have
\begin{equation}
2\dot\psi_0\dot\psi_2+\dot\psi_1^2+6\psi_1\psi_3+4\psi_2^2=0,
\end{equation}
which simplifies to
\begin{equation}
\dot\psi_2+2\psi_2^2=0.
\end{equation}
We choose
\begin{equation}
\psi_2( x_1)=\frac{1}{2}( x_1-i\epsilon)^{-1},
\end{equation}
which satisfies the equation for $\psi_2$ and also requirement (i). 

For $j=3$ we have
\begin{equation}
2\dot\psi_0\dot\psi_3+2\dot\psi_1\dot\psi_2+8\psi_1\psi_4+12\psi_2\psi_3=0,
\end{equation}
which simplifies to 
\begin{equation}
\dot\psi_3+3( x_1-i\epsilon)^{-1}\psi_3=0.
\end{equation}
Any multiple of $( x_1-i\epsilon)^{-3}$ is a solution
\begin{equation}
\psi_3=p_3( x_1-i\epsilon)^{-3},\quad p_3\in\C.
\end{equation}

For $j=4$ we have
\begin{equation}
2\dot\psi_0\dot\psi_4+2\dot\psi_1\dot\psi_3+\dot\psi_2^2+10\psi_1\psi_5+16\psi_2\psi_4+9\psi_3^2=0,
\end{equation}
which simplifies to 
\begin{equation}
\dot\psi_4+\frac{1}{8}( x_1-i\epsilon)^{-4}+4( x_1-i\epsilon)^{-1}\psi_4+\frac{9}{2}p_3^2( x_1-i\epsilon)^{-6}=0.
\end{equation}
The general solution to this equation is
\begin{equation}
\psi_4( x_1)=-\frac{1}{8}( x_1-i\epsilon)^{-3}+\frac{9}{2}p_3^2( x_1-i\epsilon)^{-5}+p_4( x_1-i\epsilon)^{-4},\quad p_4\in\C.
\end{equation}

For $j=5$ we have
\begin{equation}
2\dot\psi_5\dot\psi_0+2\dot\psi_4\dot\psi_1+2\dot\psi_3\dot\psi_2+12\psi_6\psi_1+20\psi_5\psi_2+24\psi_4\psi_3=0,
\end{equation}
which simplifies to
\begin{equation}
\dot\psi_5+ 5( x_1-i\epsilon)^{-1}\psi_5+54p_3^3( x_1-i\epsilon)^{-8}
+12p_3p_4( x_1-i\epsilon)^{-7}=0.
\end{equation}
The general solution to this equation is
\begin{multline}
\psi_5( x_1)
=27p_3^3( x_1-i\epsilon)^{-7}+12p_3p_4( x_1-i\epsilon)^{-6}
\\[5pt]
 +p_5( x_1-i\epsilon)^{-5},\quad p_5\in\C.
\end{multline}
We can continue, in principle, to solve like this to find all the functions $\psi_j$, $j=0,\ldots,M$.

In order to use requirement (iii), note that
\begin{multline}
2\nabla'\Psi\cdot\nabla' v_0+(\Delta'\Psi)v_0\\[5pt]=
\sum_{j=0}^{2M-2} x_2^j\sum_{l=0}^j\left[2\dot v_{0;l}\dot\psi_{j-l}+2(l+1)(j-l+1)v_{0;l+1}\psi_{j-l+1}
\right.\\[5pt]\left.+v_{0;l}\left(\ddot\psi_{j-l}+(j-l+1)(j-l+2)\psi_{j-l+2}\right)\right]\\[5pt]
+\sum_{j={2M-1}}^{2M} x_2^j\sum_{l=0}^j\left[2\dot v_{0;l}\dot\psi_{j-l}+ v_{0;l}\ddot\psi_{j-l} \right].
\end{multline}
For $j=0$, requirement (iii)  gives
\begin{equation}
2\dot v_{0;0}\dot\psi_0+2v_{0;1}\psi_1+v_{0;0}(\ddot\psi_0+2\psi_2)=0,
\end{equation}
which simplifies to
\begin{equation}
\dot v_{0;0}+\frac{1}{2}( x_1-i\epsilon)^{-1} v_{0;0}=0.
\end{equation}
We then choose 
\begin{equation}
v_{0;0}=( x_1-i\epsilon)^{-\frac{1}{2}}.
\end{equation}

For $j=1$ we have
\begin{equation}
2\dot v_{0;0}\dot\psi_1+2\dot v_{0;1}\dot\psi_0+4v_{0;1}\psi_2+4v_{0;2}\psi_1
+v_{0;0}(\ddot\psi_1+6\psi_3)+v_{0;1}(\ddot\psi_0+2\psi_2)=0,
\end{equation}
which simplifies to
\begin{equation}
\dot v_{0;1}+\frac{3}{2}( x_1-i\epsilon)^{-1}v_{0;1}+3p_3( x_1-i\epsilon)^{-\frac{7}{2}}=0.
\end{equation}
The general solution to this equation is
\begin{equation}
v_{0;1}=3p_3( x_1-i\epsilon)^{-\frac{5}{2}}+q_1( x_1-i\epsilon)^{-\frac{3}{2}},\quad q_1\in\C.
\end{equation}

All other $v_{k;j}$ can also be determined in the same manner.

\subsubsection{Estimate for the remainder term}

If we want the definitions \eqref{quasi-construction}, \eqref{An-1}-\eqref{An-3}, to be meaningful, we need to show that the remainder term  $r_\tau$ can be controlled as $\lambda\to\infty$.

In order to see by how much $e^{\tau  x_0}e^{i\tau\Psi}a_{\tau}$ fails to be harmonic, first note the following basic calculus result:
\begin{equation}\label{calc}
\tau^m e^{-\tau \kappa  x_2^2}\leq C_m  x_2^{-2m}.
\end{equation}
By requirements (i)-(iv), and also by making sure that $\inf\{ x_1:\vc x\in\mathcal{O}\}>0$, we  have
\begin{equation}
\left|e^{i\tau\Psi}\tau^2(1-|\nabla'\Psi|^2)a_\tau\right|\leq C|\tau|^{-\frac{M-3}{2}},
\end{equation}
and in fact more generally (for fixed $\delta$!)
\begin{equation}
\left|\pr_j^k\left[e^{i\tau\Psi}\tau^2(1-|\nabla'\Psi|^2)a_\tau\right]\right|\leq C|\tau|^{-\frac{M-3-3k}{2}}.
\end{equation}
Similarly
\begin{equation}
\left|\pr_{j}^ki\tau e^{i\tau\Psi} \left[(2\nabla'\Psi\cdot\nabla' a_\tau+(\Delta'\psi)a_\tau)
+(\Delta' a_\tau)\right]\right|\leq C |\tau|^{-\frac{M-1-3k}{2}}.
\end{equation}
In the above estimate we need to be careful with the terms involving derivatives of $\chi$ or $h$ that are generated by the $\nabla' a_\tau$ and $\Delta'a_\tau$ terms. First note that there is in fact no such term involving derivatives of $h$ since $\nabla'\Psi\cdot\nabla' a_\tau$ only contains derivatives with respect to $ x_1$ and $ x_2$, and also $\Delta''' a_\tau=0$ as $h$ is harmonic. As for the terms containing derivatives of $\chi$, those are all supported in $\{\frac{\delta}{2}\leq  x_2\leq \delta\}$, so they may be controlled by a bound of the form $C e^{-\tau\kappa\delta^2}$.

It follows that
\begin{equation}
||(\tau^2+\Delta') e^{i\tau\Psi}a_\tau||_{H^k(\mathcal{O})}=\mathscr{O}(|\tau|^{-\frac{M-3-3k}{2}}).
\end{equation}

The remainder term $r_\tau$ satisfies the equation
\begin{equation}
e^{\mp\lambda  x_0}\Delta \left(e^{\pm\lambda  x_0} r_\tau\right)
=-e^{\pm i\sigma  x_0}[\tau^2+\Delta'] \left(e^{i\tau\Psi}a_\tau\right).
\end{equation}
We have 
\begin{equation}
||e^{\mp\lambda  x_0}\Delta \left(e^{\pm\lambda  x_0} r_\tau\right)||_{H^k(\mathcal{O})}
\leq C_{k,\sigma}||[\tau^2+\Delta'] \left(e^{i\tau\Psi}a_\tau\right)||_{H^k(\mathcal{O})}=\mathscr{O}(|\tau|^{-\frac{M-3-3k}{2}}).
\end{equation}
It then follows (e.g. \cite[Proposition 2]{FO}) that there exists an $r_\tau$ such that
\begin{equation}
||r_\tau||_{H^{k}(\mathcal{O})}=\mathscr{O}(|\tau|^{-\frac{M-1-3k}{2}}).
\end{equation}

\subsection{Application of a stationary phase theorem}\label{subsec-stationary}

Here we will plug  the harmonic funcitons constructed above into the identity \eqref{basic-integral-identity}. Before doing so, it is useful to start with a few preparatory calculations. 

We observe that
\begin{equation}
\nabla u_\tau^\pm=\tau e^{\pm\tau  x_0}e^{i\tau\Psi}(\pm a_\tau\vc e_0+i(\nabla'\Psi)a_\tau+\tau^{-1}\nabla' a_\tau+\nabla r_\tau).
\end{equation}
This can be expanded in powers of $\tau^{-1}$ and $ x_2$ as follows
\begin{multline}
V_\tau^+:=\tau^{-1} e^{-\tau  x_0}e^{-i\tau\Psi}\nabla u_\tau^+\\[5pt]
=\chi h\Big[(v_{0;0}+v_{0;1} x_2+v_{0;2} x_2^2+\tau^{-1}v_{1;0}+v_{0;3} x_2^3+v_{1;1}\tau^{-1} x_2)\vc e_0\\[5pt]
+\left[iv_{0;0}+iv_{0;1} x_2+i(v_{0;2}+v_{0;0}\dot\psi_2) x_2^2+(iv_{1;0}+\dot v_{0;0})\tau^{-1}\right.\\[5pt]\left.
+i(v_{0;3}+v_{0;1}\dot\psi_2+v_{0;0}\dot\psi_3) x_2^3+(iv_{1;1}+\dot v_{0;1})\tau^{-1} x_2   \right]\vc e_1\\[5pt]
+\left[2iv_{0;0}\psi_2 x_2+i(2v_{0;1}\psi_2+3v_{0;0}\psi_3) x_2^2+v_{0;1}\tau^{-1}\right.\\[5pt]\left.
+i(2v_{0;2}\psi_2+3v_{0;1}\psi_3+4v_{0;0}\psi_4) x_2^3 + (2iv_{1;0}\psi_2+2v_{0;2})\tau^{-1} x_2   \right]\vc e_2\Big]\\[5pt]
+\tau^{-1}\left[ \delta^{-1}\dot\chi h\vc e_2+\chi\nabla''' h\right](v_{0;0}+v_{0;1} x_2)+\cdots ,
\end{multline}
where the omitted terms contain a power of $ x_2$ larger than 3, or a power of $\tau^{-1}$ larger than 1, or $\tau^{-1}$ times a power of $ x_2$ larger than 1.
Let $\vc\alpha=\vc e_0+i\vc e_1$.
Regrouping terms we can write
\begin{multline}\label{V-plus}
V_\tau^+=\chi h\Big[v_{0;0}\vc\alpha+ x_2\left[v_{0;1}\vc\alpha+2iv_{0;0}\psi_2\vc e_2  \right]\\[5pt]
+ x_2^2\left[v_{0;2}\vc\alpha+iv_{0;0}\dot\psi_2\vc e_1+i(2v_{0;1}\psi_2+3v_{0;0}\psi_3)\vc e_2  \right]\\[5pt]
+\tau^{-1}\left[v_{1;0}\vc\alpha +\dot v_{0;0}\vc e_1+v_{0;1}\vc e_2 \right]\\[5pt]
+ x_2^3\left[v_{0;3}\vc\alpha+i(v_{0;1}\dot\psi_2+v_{0;0}\dot\psi_3)\vc e_1 +i(2v_{0;2}\psi_2+3v_{0;1}\psi_3+4v_{0;0}\psi_4)\vc e_2   \right]\\[5pt]
+\tau^{-1} x_2\left[ v_{1;1}\vc\alpha+\dot v_{0;1}\vc e_1 +(2iv_{1;0}\psi_2+2v_{0;2})\vc e_2 \right]\Big]\\[5pt]
+\tau^{-1}\left[ \delta^{-1}\dot\chi h\vc e_2+\chi\nabla''' h\right](v_{0;0}+v_{0;1} x_2)+\cdots,
\end{multline}
and similarly
\begin{multline}\label{V-minus}
(-1)\overline{V_\tau^-}:=(-1)\overline{\tau^{-1} e^{\tau  x_0}e^{i\tau\Psi}\nabla u_\tau^-}\\[5pt]
=\chi h\Big[\overline{v_{0;0}}\vc\alpha+ x_2\left[\overline{v_{0;1}}\vc\alpha+2i\overline{v_{0;0}}\overline{\psi_2}\vc e_2  \right]\\[5pt]
+ x_2^2\left[\overline{v_{0;2}}\vc\alpha+i\overline{v_{0;0}}\overline{\dot\psi_2}\vc e_1+i(2\overline{v_{0;1}}\overline{\psi_2}+3\overline{v_{0;0}}\overline{\psi_3})\vc e_2  \right]\\[5pt]
+\overline{\tau^{-1}}\left[\overline{v_{1;0}}\vc\alpha -\overline{\dot v_{0;0}}\vc e_1-\overline{v_{0;1}}\vc e_2 \right]\\[5pt]
+ x_2^3\left[\overline{v_{0;3}}\vc\alpha+i(\overline{v_{0;1}}\overline{\dot\psi_2}+\overline{v_{0;0}}\overline{\dot\psi_3})\vc e_1 +i(2\overline{v_{0;2}}\overline{\psi_2}+3\overline{v_{0;1}}\overline{\psi_3}+4\overline{v_{0;0}}\overline{\psi_4})\vc e_2   \right]\\[5pt]
+\overline{\tau^{-1}} x_2\left[ \overline{v_{1;1}}\vc\alpha-\overline{\dot v_{0;1}}\vc e_1 +(2i\overline{v_{1;0}}\overline{\psi_2}-2\overline{v_{0;2}})\vc e_2 \right]\Big]\\[5pt]
-\overline{\tau^{-1}}\left[ \delta^{-1}\dot\chi h\vc e_2+\chi\nabla''' h\right](\overline{v_{0;0}}+\overline{v_{0;1}} x_2)+\cdots.
\end{multline}

We choose
\begin{equation}
w_1=u_{1;\tau}^+,\quad w_2=u_{2;\tau}^+,\quad w_3=\overline{u_{3;2\tau}^-},
\end{equation}
where the numerical indices indicate that we might have different choices in the constants $p_j$,etc., and different choices of harmonic functions $h_1$, $h_2$, and $h_3$. For the various constants that appear in the expansions of our special solutions we will use a superscript to indicate the solution to which it belongs. For example $p_1^2$ will be the $p_1$ constant associated to solution 2.

For convenience, we choose $p_3^j=p_4^j=p_5^j=0$ for $j=1,2,3$. This will simplify somewhat the computations that will follow below.

With these choices
\begin{equation}
e^{\tau  x_0}e^{i\tau\Psi_1}e^{\tau  x_0}e^{i\tau\Psi_2}e^{-2\overline{\tau}  x_0}e^{-2i\overline{\tau}\overline{\Psi_3}}
=e^{4i\sigma  x_0}e^{i(\tau\Psi_1+\tau\Psi_2-2\overline{\tau}\overline{\Psi_3})},
\end{equation}
where we can expand
\begin{equation}
\tau\Psi_1+\tau\Psi_2-2\overline{\tau}\overline{\Psi_3}
=4i\sigma  x_1+\sigma F_1+\lambda F_2,
\end{equation}
with
\begin{equation}
F_1=\frac{2i x_1}{ x_1^2+\epsilon^2} x_2^2-\frac{i}{2}\frac{ x_1^3-3 x_1\epsilon^2}{( x_1^2+\epsilon^2)^3} x_2^4+\mathscr{O}( x_2^6),
\end{equation}
\begin{equation}
F_2=\frac{2i\epsilon}{ x_1^2+\epsilon^2} x_2^2+\frac{i}{2}\frac{\epsilon^3-3 x_1^2\epsilon}{( x_1^2+\epsilon^2)^3} x_2^4+\mathscr{O}( x_2^6).
\end{equation}

We can write
\begin{multline}
\tau^{-2}\overline{\tau}^{-1}\frac{1}{2}C : \nabla w_1\otimes\nabla w_2\otimes\nabla w_3
\\[5pt]
=e^{i\lambda F_2}e^{4i\sigma  x_0}e^{-4\sigma  x_1}e^{i\sigma F_1}C :  V_{1;\tau}^+\otimes V_{2;\tau}^+\otimes\overline{V_{3;2\tau}^-}.
\end{multline}
We would like to quote here a suitable stationary phase theorem, slightly addapted from the source to fit our particular circumstances.
\begin{thm}[{see \cite[Theorem 7.7.5]{H1}}]\label{H-thm}
Let $0\in I\subset \R$ be a bounded interval. Let $X$ be an open neighborhood of $\overline{I}$. If $U\in C_{0}^{2k}(I)$, $F\in C^{3k+1}(X)$ and $\im F\geq0$ in $X$, $F'(0)=0$, $ F''(0)\neq 0$, $F'\neq0$ in $I\setminus\{0\}$. Let
\begin{equation}
G(t)=F(t)-F(0)-\frac{1}{2}F''(0)t^2
\end{equation}
and
\begin{equation}
L_jU=\sum_{\nu-\mu=j}\sum_{2\nu\geq3\mu}i^{-j}\frac{1}{\nu!\mu!}\left(-\frac{1}{2F''(0)} \right)^\nu\frac{\dd^{2\nu}}{\dd t^{2\nu}}\left(G^\mu U\right)(0).  
\end{equation}
Then
\begin{equation}
\left|\int_I e^{i\lambda F(t)}U(t)-e^{i\lambda F(0)}\left( \frac{2\pi i}{\lambda F''(0)} \right)^{\frac{1}{2}}\sum_{j<k}\lambda^{-j}L_jU   \right|\leq C\lambda^{-k}||U||_{C^{2k}(I)}.
\end{equation}
\end{thm}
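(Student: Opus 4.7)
The plan is to follow the standard complex-phase stationary phase argument. First I would normalize to $F(0)=0$ (the constant $e^{i\lambda F(0)}$ drops out of the estimate) and pick a smooth cutoff $\chi_0\in C_0^\infty(I)$ equal to $1$ on a small neighborhood of the critical point $0$, chosen so small that $|G(t)|$ is dominated by $\tfrac{1}{4}|F''(0)|t^2$ there. The integral $\int_I e^{i\lambda F}U\,dt$ then splits into a near-critical piece (with $\chi_0 U$) and a far-from-critical piece (with $(1-\chi_0)U$).

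For the far piece, $F'\ne 0$ throughout, so I would apply the usual non-stationary integration-by-parts: write $e^{i\lambda F}=(i\lambda F')^{-1}\partial_t e^{i\lambda F}$ and integrate by parts $2k$ times. Because $U\in C_0^{2k}(I)$ the boundary contributions vanish, and because $\im F\geq 0$ gives $|e^{i\lambda F}|\leq 1$, each iteration gains a factor $\lambda^{-1}$, producing a contribution of size $O(\lambda^{-2k}\|U\|_{C^{2k}})$, well inside the target error.

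For the near piece, I would Taylor-expand $e^{i\lambda G(t)}=\sum_{\mu<k}\tfrac{(i\lambda G(t))^\mu}{\mu!}+R_k(t)$ and reduce everything to finitely many Gaussian integrals weighted by $e^{i\lambda F''(0)t^2/2}$. Each Gaussian integral is evaluated by the classical complex-Gaussian asymptotic expansion
\begin{equation*}
\int e^{i\lambda F''(0)t^2/2}\phi(t)\,dt\sim \Bigl(\frac{2\pi i}{\lambda F''(0)}\Bigr)^{1/2}\sum_{\nu}\frac{1}{\nu!}\Bigl(-\frac{1}{2i\lambda F''(0)}\Bigr)^\nu\phi^{(2\nu)}(0),
\end{equation*}
which follows from Plancherel together with the fact that the Gaussian is essentially a fixed point of the Fourier transform (with a small contour shift to accommodate the possible sign of $\im F''(0)$, available since $\im F\geq 0$ in $X$). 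Collecting the double sum by $j=\nu-\mu$ yields a coefficient of $\lambda^{-j-1/2}$ that, after simplifying $i^\mu(-2i)^{-\nu}F''(0)^{-\nu}=i^{-j}(-1/(2F''(0)))^\nu$, is exactly $(2\pi i/(\lambda F''(0)))^{1/2}L_jU$. The restriction $2\nu\geq 3\mu$ appears automatically because $G$ vanishes to third order at $0$, so $(G^\mu U)^{(2\nu)}(0)=0$ whenever $2\nu<3\mu$ and those terms may be omitted from the summation range.

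The main obstacle will be uniform error bookkeeping: the Taylor remainder $R_k$ carries a factor $\lambda^k$ while being controlled pointwise by $|t|^{3k}\|G\|_{C^{3k+1}}/k!$, and against a Gaussian of effective width $\lambda^{-1/2}$ this produces a size $\lambda^{-3k/2-1/2}$, so that combined with the explicit $\lambda^k$ prefactor and the analogous tail of the Gaussian expansion (terms with $\nu\geq k$), the aggregate error is $O(\lambda^{-k}\|U\|_{C^{2k}})$ with a constant depending only on $F$, $I$, and $\chi_0$. The regularity hypotheses $F\in C^{3k+1}$ and $U\in C_0^{2k}$ are tuned precisely so that every derivative appearing in $L_jU$ for $j<k$, as well as each error estimate, is well defined.
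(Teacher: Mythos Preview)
The paper does not prove this theorem at all; it is quoted verbatim (with a slight adaptation of notation) from H\"ormander's book as a black-box tool, and is applied rather than established. There is therefore no ``paper's own proof'' to compare against.

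Your sketch is a faithful outline of the standard proof that H\"ormander gives: localize near the unique critical point, handle the far piece by repeated integration by parts against the nonvanishing phase derivative, and on the near piece Taylor-expand $e^{i\lambda G}$ and reduce to Gaussian moment integrals whose asymptotics produce exactly the operators $L_j$. Your identification of the summation range $2\nu\geq 3\mu$ via the third-order vanishing of $G$ is correct, and the algebra collapsing $i^\mu(-1/(2i F''(0)))^\nu$ to $i^{-j}(-1/(2F''(0)))^\nu$ is right. One small caution: your remainder estimate tacitly uses Gaussian \emph{decay} of $e^{i\lambda F''(0)t^2/2}$, but $\im F''(0)$ may vanish (the hypothesis is only $\im F\geq 0$); in that purely oscillatory case the pointwise argument must be replaced by the scaling $\int e^{i\lambda a t^2/2}t^m\chi(t)\,dt=O(\lambda^{-(m+1)/2})$ valid for $\im a\geq 0$, $a\neq 0$, which is what H\"ormander actually uses. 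With that adjustment your plan goes through.
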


With the following definitions, this theorem can be applied to the integral in the $ x_2$ variable. We define
\begin{equation}
U=(-1)e^{i\sigma F_1}C :  V_{1;\tau}^+\otimes V_{2;\tau}^+\otimes\overline{V_{3;2\tau}^-},\quad F( x_2)=F_2( x_2),
\end{equation}
which then gives
\begin{equation}
G( x_2)=F_2( x_2)-F_2(0)-\frac{1}{2}F_2''(0) x_2^2=\frac{i}{2}\frac{\epsilon^3-3 x_1^2\epsilon}{( x_1^2+\epsilon^2)^3} x_2^4+\mathscr{O}( x_2^6).
\end{equation}
Here we treat $F_2$ as a function of $ x_2$.
With this $G$ we have
\begin{equation}
L_0(U)=U|_{ x_2=0},
\end{equation}
\begin{equation}
L_1(U)=\frac{1}{8\epsilon}\left[( x_1^2+\epsilon^2)\pr_2^2U+\frac{1}{8}\frac{3 x_1^2-\epsilon^2}{ x_1^2+\epsilon^2}U    \right]_{ x_2=0},
\end{equation}
and
\begin{equation}
L_2(U)=\frac{1}{128\epsilon^2}\left[( x_1^2+\epsilon^2)^2\pr_2^4U
+\frac{15(3 x_1^2-\epsilon^2)}{2}\pr_2^2U
+\frac{105(3 x_1^2-\epsilon^2)^2}{16( x_1^2+\epsilon^2)^2}U  \right]_{ x_2=0}.
\end{equation}
Theorem \ref{H-thm} gives  that
\begin{multline}\label{hormander}
\tau^{-2}\overline{\tau}^{-1}\frac{(-1)}{2}\int C : \nabla w_1\otimes\nabla w_2\otimes\nabla w_3\dd  x_2\\[5pt]
=\left(\pi \frac{ x_1^2+\epsilon^2}{2\lambda\epsilon} \right)^{\frac{1}{2}}e^{4i\sigma  x_0}e^{-4\sigma  x_1}\left[L_0(U)+\lambda^{-1}L_1(U) +\lambda^{-2}L_2(U) \right]\\[5pt]+\mathscr{O}(\lambda^{-\frac{7}{2}}).
\end{multline}

\subsection{$S$ is zero}\label{subsec-S-zero}

The first order (in $\lambda$) term in the expansion comes from 
\begin{multline}
U|_{ x_2=0}
=h_1(\vc x''')h_2(\vc x''')h_3(\vc x''')( x_1-i\epsilon)^{-\frac{1}{2}}( x_1^2+\epsilon^2)^{-\frac{1}{2}}\\[5pt]
\times C( x_0, x_1,0,\vc x''') : \vc\alpha\otimes\vc\alpha\otimes\vc\alpha+\mathscr{O}(\lambda^{-1}),
\end{multline}
so, recalling the decomposition $C=S+D$, we have
\begin{multline}
0=-\left(\frac{2\epsilon}{\pi}\right)^{\frac{1}{2}}\lim_{\lambda\to\infty}\lambda^{\frac{1}{2}}\tau^{-2}\overline{\tau}^{-1}\frac{1}{2}\int S:\nabla u_1\otimes\nabla u_2\otimes\nabla u_3\dd \vc x\\[5pt]
=\int e^{-4\sigma  x_1}\hat S(4\sigma, x_1,0,\vc x''') : \vc\alpha\otimes\vc\alpha\otimes\vc\alpha\\[5pt] \times( x_1-i\epsilon)^{-\frac{1}{2}}h_1(\vc x''')h_2(\vc x''')h_3(\vc x''')\dd  x_1\dd \vc x'''.
\end{multline}
In the above $\hat S$ is the Fourier transform of $S$ with respect to the $ x_0$ variable.

We need the following
\begin{lem}\label{jacobi}
Let $I$ be a bounded interval in $\R$ that does not contain the origin, $\mu>0$, and $\epsilon_0>0$. Let $f \in L^2(I)$ be such that 
\begin{equation}
\int_I f(t) (t-i\epsilon)^{-\mu}\dd t=0,\quad\forall \epsilon\in(0,\epsilon_0).
\end{equation}
Then $f= 0$.
\end{lem}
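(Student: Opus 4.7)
The plan is to reduce the vanishing hypothesis to a standard uniqueness statement for the Laplace transform via the integral representation of the Gamma function. Without loss of generality I assume $I\subset(0,\infty)$ with $a:=\inf I>0$ (the case $I\subset(-\infty,0)$ is symmetric, via $t\mapsto -t$ and an adjustment of the branch of $(\cdot)^{-\mu}$). For $t>0$ and $\epsilon\geq 0$, the principal branch of $(t-i\epsilon)^{-\mu}$ admits the representation
\begin{equation}
(t-i\epsilon)^{-\mu}=\frac{1}{\Gamma(\mu)}\int_0^\infty s^{\mu-1}e^{-st}e^{is\epsilon}\dd s.
\end{equation}
Plugging this into the hypothesis and applying Fubini's theorem (justified by the bound $|f(t)|s^{\mu-1}e^{-st}\leq |f(t)|s^{\mu-1}e^{-as}$ on $I\times(0,\infty)$, which is integrable since $f\in L^2(I)\subset L^1(I)$ and $\mu>0$) rewrites the assumption as
\begin{equation}
\int_0^\infty s^{\mu-1}g(s)e^{is\epsilon}\dd s=0,\quad\forall\,\epsilon\in(0,\epsilon_0),
\end{equation}
where $g(s)=\int_I f(t)e^{-st}\dd t$ is the Laplace transform of $f\mathbf{1}_I$.

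Next I would promote $\epsilon$ to a complex variable by setting
\begin{equation}
H(z)=\int_0^\infty s^{\mu-1}g(s)e^{isz}\dd s,\quad \im z\geq 0.
\end{equation}
The estimate $|g(s)|\leq ||f||_{L^1(I)}\, e^{-as}$, together with $\mu>0$, guarantees that $s\mapsto s^{\mu-1}g(s)$ belongs to $L^1((0,\infty))$, so $H$ is continuous on the closed upper half plane and holomorphic on the open upper half plane. By assumption $H$ vanishes on the real interval $(0,\epsilon_0)$. Since $H$ is continuous up to this interval and equals zero there, the Schwarz reflection principle extends $H$ holomorphically across $(0,\epsilon_0)$ onto $\mathbb{H}\cup(0,\epsilon_0)\cup\mathbb{H}^{*}$ (a connected domain), and the identity theorem applied to this extension forces $H\equiv 0$ on the closed upper half plane, in particular on $\R$.

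Finally, I would observe that the restriction of $H$ to $\R$ is, up to orientation, the Fourier transform of the $L^1$ function $s\mapsto s^{\mu-1}g(s)\mathbf{1}_{s>0}$, so Fourier uniqueness yields $s^{\mu-1}g(s)=0$ for $s>0$, hence $g\equiv 0$ on $(0,\infty)$. Injectivity of the Laplace transform (which also follows directly from the fact that $g$ extends to an entire function of $s$, since $f$ has compact support) then forces $f=0$ in $L^2(I)$. The step I expect to require the most care is the passage from ``$H=0$ on $(0,\epsilon_0)$'' to ``$H\equiv 0$ on the upper half plane''; this is the main point where one must verify continuity of $H$ up to the real boundary, which the assumptions $\mu>0$ and $0\notin\overline{I}$ render automatic.
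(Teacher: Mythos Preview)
Your argument is correct and takes a genuinely different route from the paper. The paper's proof is more elementary: it expands $(t-i\epsilon)^{-\mu}$ as a binomial series in $i\epsilon/t$ (valid since $0\notin\overline I$ and $\epsilon$ is small), obtains a convergent power series in $\epsilon$ whose coefficients are $\int_I f(t)\,t^{-\mu-k}\,\dd t$, and concludes from the density of $\mathrm{span}\,\{t^{-k}\}_{k\geq 0}$ in $L^2(I)$. Your approach instead passes through the Gamma-integral representation and reduces the question to Fourier/Laplace uniqueness via a holomorphic extension argument. One minor simplification available to you: because $|g(s)|\leq \|f\|_{L^1(I)}e^{-as}$, the function $H$ is in fact holomorphic on the strip $\{\im z>-a\}$, which already contains $(0,\epsilon_0)$; the identity theorem then applies directly, and Schwarz reflection is not needed. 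The paper's proof is shorter and stays within real-variable tools, whereas yours is more structural and would adapt more readily if the kernel were replaced by another family depending analytically on $\epsilon$ without an explicit power-series expansion.
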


\begin{proof}
We have  $\forall \epsilon\in(0,\epsilon_0)$
\begin{multline}
0=\int_I f(t)(t-i\epsilon)^{-\mu}\dd t\\[5pt]
=\sum_{k=0}^\infty(i\epsilon)^k\frac{\mu(\mu+1)\cdots(\mu+k-1)}{k!}\int_I f(t)t^{-\mu} t^{-k}\dd t.
\end{multline}
It follows that for all $k=0,1,\ldots$
\begin{equation}
\int_I f(t)t^{-\mu} t^{-k}\dd t=0,
\end{equation}
so $f=0$, since $\mathrm{span}\,\{t^{-k}\}_{k=0}^\infty$ is dense in $L^2(I)$.
\end{proof}

Using Lemma \ref{jacobi} we can conclude that 
\begin{equation}
\int \hat S(4\sigma, x_1,0,\vc x''') : \vc\alpha\otimes\vc\alpha\otimes\vc\alpha h_1(\vc x''')h_2(\vc x''')h_3(\vc x''')\dd \vc x'''=0.
\end{equation}
It is  known that $\{h_1h_2h_3:h_j\text{ harmonic, } j=1,2,3\}$ is dense in $L^2(B_R^{\R^{n-2}})$, so we have
\begin{equation}
\hat S(4\sigma, x_1,0,0) : \vc\alpha\otimes\vc\alpha\otimes\vc\alpha=0,
\end{equation}
which implies that
\begin{equation}
S( x_0, x_1,0,0) : \vc\alpha\otimes\vc\alpha\otimes\vc\alpha=0.
\end{equation}
Since we can translate our coordinate system at will, it follows that
\begin{equation}
S(\vc x) : \vc\alpha\otimes\vc\alpha\otimes\vc\alpha=0, \quad\forall \vc x\in \mathcal{O}.
\end{equation}
Recall that $\vc\alpha=\vc e_0+i\vc e_1$. 
More generally, since the  coordinate system can also be arbitrarily rotated, we see that
\begin{equation}
S : \vc\alpha\otimes\vc\alpha\otimes\vc\alpha=0,
\end{equation}
for any $\vc\alpha\in\C^3$ such that $\vc\alpha\cdot\vc\alpha=0$.
By Hilbert's Nullstellensatz we then deduce that there exists $\vc c:\mathcal{O}\to\C^3$ such that
\begin{equation}\label{S-structure}
S:\vc\beta\otimes\vc\beta\otimes\vc\beta=(\vc c\cdot\vc\beta)(\vc\beta\cdot\vc\beta),\quad\forall\beta\in\C^3.
\end{equation}
Since $S$ is symmetric in all indices, by polarization, for vectors $\vc\beta_1,\vc\beta_2, \vc\beta_3\in\C^3$ we have
\begin{equation}\label{S-structure}
S:\vc\beta_1\otimes\vc\beta_2\otimes\vc\beta_3=\frac{1}{3}\left[(\vc c\cdot\vc\beta_1)(\vc\beta_2\cdot\vc\beta_3)+(\vc c\cdot\vc\beta_2)(\vc\beta_1\cdot\vc\beta_3)+(\vc c\cdot\vc\beta_3)(\vc\beta_1\cdot\vc\beta_2)   \right].
\end{equation}

We can isolate $S$ in \eqref{basic-integral-identity}. Since we have established \eqref{S-structure}, we can now use classical CGO solutions. To this end, let $\vc\xi,\vc\mu\in\R^3\setminus\{0\}$ such that $\vc\xi\perp\vc\mu$, $|\vc\mu|=1$,   and 
\begin{equation}
\vc\zeta_{\pm}=\frac{1}{2}\vc\xi\pm\frac{i}{2}|\vc\xi|\vc\mu.
\end{equation}
It is easy to check that
\begin{equation}
\vc\zeta_{+}\cdot\vc\zeta_{+}=\vc\zeta_{-}\cdot\vc\zeta_{-}=0,\quad \vc\zeta_{+}\cdot\vc\zeta_{-}=\frac{1}{2}|\vc\xi|^2.
\end{equation}
With these choices $e^{i\vc\zeta_{\pm}\cdot  \vc x}$ is a harmonic function.

Let 
\begin{equation}
u_1=u_2=e^{i\vc\zeta_{+}\cdot  \vc x},\quad u_3=e^{2i\vc\zeta_{-}\cdot  \vc x}.
\end{equation}
Note that by \eqref{basic-integral-identity} we have
\begin{multline}
3\int S:\nabla u_1\otimes\nabla u_2\otimes\nabla u_3\dd \vc x=
\int C:\big(\nabla u_1\otimes\nabla u_2\otimes\nabla u_3\\[5pt]+\nabla u_2\otimes\nabla u_3\otimes\nabla u_1+\nabla u_3\otimes\nabla u_2\otimes\nabla u_1  \big)\dd \vc x=0.
\end{multline}
Then
\begin{equation}
0=\hat S(\xi):\vc\zeta_{+}\otimes\vc\zeta_{+}\otimes\vc\zeta_{-}=\frac{1}{3}(\mathscr{F}{\vc c}\cdot\vc\zeta_{+})|\vc\xi|^2.
\end{equation}
Similarly, if we choose 
\begin{equation}
u_1=u_2=e^{i\vc\zeta_{-}\cdot  \vc x},\quad u_3=e^{2i\vc\zeta_{+}\cdot  \vc x},
\end{equation}
 we get
\begin{equation}
0=\mathscr{F} S(\xi):\vc\zeta_{-}\otimes\vc\zeta_{-}\otimes\vc\zeta_{+}=\frac{1}{3}(\mathscr{F}{\vc c}\cdot\vc\zeta_{-})|\vc\xi|^2.
\end{equation}
Adding and subtracting the two gives
\begin{equation}
\mathscr{F}{\vc c}\cdot\vc \xi=\mathscr{F}{\vc c}\cdot\vc\mu=0,
\end{equation}
so $\mathscr{F}{\vc c}=0$, which implies $S=0$. 

\subsection{$C$ is zero}\label{subsec-A-zero}

\subsubsection{The order $\lambda^{-\frac{3}{2}}$ term}

The expansions we have for the $V_{\tau}^\pm$ terms give an expansion for $C :  V_{1;\tau}^+\otimes V_{2;\tau}^+\otimes\overline{V_{3;2\tau}^-}$ in powers of $\lambda^{-1}$ and $ x_2$. The fact that $S=0$ implies that the $\lambda^0  x_2^0$ term is zero. In order to obtain the next order in $\lambda^{-1}$ information from \eqref{hormander}, we need to compute the $\lambda^{-1} x_2^0$ term of the expansion, as well as the $\lambda^{0} x_2^2$ term. A quick inspection of the expansions \eqref{V-plus} and \eqref{V-minus} shows that this term will be  a polynomial of order 1 in the arbitrary constants $q_1^j$. Since these can be chosen independently, the coefficient of each one will give us an independent identity. In order to simplify our computations, we will only write down terms that contain a  $q_1^j$ factor.

We begin with the order $\lambda^{-1} x_2^0$ of the expansion of $C :  V_{1;\tau}^+\otimes V_{2;\tau}^+\otimes\overline{V_{3;2\tau}^-}$. It is easy to see that this is
\begin{multline}
\chi^3 h_1h_2h_3C:\Big[
v_{0;1}^1|v_{0;0}|^2\vc e_2\otimes\vc\alpha\otimes\vc\alpha
+v_{0;1}^2|v_{0;0}|^2\vc\alpha\otimes\vc e_2\otimes\vc\alpha\\[5pt]
+\frac{1}{2}\overline{v_{0;1}^3}(v_{0;0})^2\vc\alpha\otimes\vc\alpha\otimes\vc e_2
\Big].
\end{multline}

The $\lambda^0  x_2^2$ term is made up of expressions obtained in two different ways, which we will refer to using the suggestive labels ``$ x_2^2\times 1\times 1$'' and ``$ x_2\times  x_2\times 1$''. The 
``$ x_2^2\times 1\times 1$'' contribution is
\begin{multline}
\chi^3 h_1h_2h_3C:\Big[
2i|v_{0;0}|^2\psi_2 v_{0;1}^1\vc e_2\otimes\vc\alpha\otimes\vc\alpha
+2i|v_{0;0}|^2\psi_2 v_{0;1}^2\vc\alpha\otimes\vc e_2\otimes\vc\alpha\\[5pt]
+2i(v_{0;0})^2\overline{\psi_2}\overline{v_{0;1}^3}\vc\alpha\otimes\vc\alpha\otimes\vc e_2
\Big].
\end{multline}
The ``$ x_2\times  x_2\times 1$'' contribution is
\begin{multline}
\chi^3 h_1h_2h_3C:\Big[
2i|v_{0;0}|^2\psi_2v_{0;1}^1\vc\alpha\otimes\vc e_2\otimes\vc\alpha
+2i|v_{0;0}|^2\overline{\psi}_2 v_{0;1}^1\vc\alpha\otimes\vc\alpha\otimes\vc e_2\\[5pt]
2i|v_{0;0}|^2\psi_2v_{0;1}^2\vc e_2\otimes\vc\alpha\otimes\vc\alpha
+2i|v_{0;0}|^2\overline{\psi}_2 v_{0;1}^2\vc\alpha\otimes\vc\alpha\otimes\vc e_2\\[5pt]
+2i(v_{0;0})^2\psi_2 \overline{v_{0;1}^3}\vc e_2\otimes\vc\alpha\otimes\vc\alpha
+2i(v_{0;0})^2\psi_2 \overline{v_{0;1}^3}\vc\alpha\otimes\vc e_2\otimes\vc\alpha
\Big].
\end{multline}

Before adding these terms up, observe that since $S=0$ we have
\begin{equation}
C:\left(\vc e_2\otimes\vc\alpha\otimes\vc\alpha+\vc\alpha\otimes\vc e_2\otimes\vc\alpha+\vc\alpha\otimes\vc\alpha\otimes\vc e_2\right)=0.
\end{equation}
The relevant part of the $\lambda^0  x_2^2$ term is then
\begin{multline}
2i(\overline{\psi}_2-\psi_2)\chi^3 h_1h_2h_3\Big[
|v_{0;0}|^2 v_{0;1}^1
+|v_{0;0}|^2 v_{0;1}^2
+(v_{0;0})^2 \overline{v_{0;1}^3}
\Big]\\[5pt]\times C:\vc\alpha\otimes\vc\alpha\otimes\vc e_2
\end{multline}

In order to compute the $\lambda^0$ order term in $L_1(U)$ we also need to account for two terms containing derivatives of $C$. The first is
\begin{multline}
\left.\pr_2^2(e^{i\sigma F_1}C) : V_{1;\tau}\otimes V_{2;\tau}\otimes\overline{V}_{3;-2\tau}\right|_{ x_2=0}\\[5pt]
=\left.h_1h_2h_3\pr_2^2(e^{i\sigma F_1}C):\vc\alpha\otimes\vc\alpha\otimes\vc\alpha\right|_{ x_2=0}+\mathscr{O}(\lambda^{-1})\\[5pt]=\mathscr{O}(\lambda^{-1}).
\end{multline}
This shows that it will contribute nothing to our computation. The one second is
\begin{multline}
\left.\pr_2(e^{i\sigma F_1}C) : \pr_2(V_{1;\tau}\otimes V_{2;\tau}\otimes\overline{V}_{3;-2\tau})\right|_{ x_2=0}
=\left.h_1h_2h_3\pr_2(e^{i\sigma F_1}C)\right|_{ x_2=0}\\[5pt]:\Big[
(v_{0;1}^1|v_{0;0}|^2+v_{0;1}^2|v_{0;0}|^2+\overline{v_{0;1}^3}(v_{0;0})^2)\vc\alpha\otimes\vc\alpha\otimes\vc\alpha\\[5pt]
+2i v_{0;0}|v_{0;0}|^2(\psi_2\vc e_2\otimes\vc\alpha\otimes\vc\alpha+
\psi_2\vc\alpha\otimes\vc e_2\otimes\vc\alpha+
\overline{\psi}_2\vc\alpha\otimes\vc\alpha\otimes\vc e_2)
\Big]\\[5pt]+\mathscr{O}(\lambda^{-1}),
\end{multline}
which also contributes nothing to the computation.

 Note  that
\begin{equation}\label{magic}
2i\frac{ x_1^2+\epsilon^2}{4\epsilon}(\overline{\psi_2}-\psi_2)=\frac{1}{2}.
\end{equation}
We therefore get that the $\lambda^{-1}$ order term of $L_0(U)+\lambda^{-1}L_1(U)$ is
\begin{multline}\label{first-order}
 h_1h_2h_3C|_{ x_2=0}:\Big[
|v_{0;0}|^2v_{0;1}^1\frac{1}{2}\left(\vc e_2\otimes\vc\alpha\otimes\vc\alpha-\vc\alpha\otimes\vc e_2\otimes\vc\alpha\right)\\[5pt]
+|v_{0;0}|^2v_{0;1}^2\frac{1}{2}\left(\vc\alpha\otimes\vc e_2\otimes\vc\alpha-\vc e_2\otimes\vc\alpha\otimes\vc\alpha\right)\\[5pt]
+(v_{0;0})^2\overline{v_{0;1}^3}\vc\alpha\otimes\vc\alpha\otimes\vc e_2
\Big]
\end{multline}

We first use the $q_1^3$ term to obtain the identity
\begin{multline}
0
=\int e^{-4\sigma  x_1}\hat C(4\sigma, x_1,0,\vc x''') : \vc\alpha\otimes\vc\alpha\otimes\vc e_2\\[5pt] \times ( x_1-i\epsilon)^{-\frac{1}{2}}( x_1+i\epsilon)^{-1}h_1(\vc x''')h_2(\vc x''')h_3(\vc x''')\dd  x_1\dd \vc x'''.
\end{multline}

In order to continue, we will need an analogue of Lemma \ref{jacobi}.
\begin{lem}\label{jacobi2}
Let $I$ be a bounded interval in $\R$ that does not contain the origin,  and let $\epsilon_0>0$. Let $f \in L^2(I)$ be such that 
\begin{equation}
\int_I f(t) (t-i\epsilon)^{-\frac{1}{2}}(t+i\epsilon)^{-1}\dd t=0,\quad\forall \epsilon\in(0,\epsilon_0).
\end{equation}
Then $f= 0$.
\end{lem}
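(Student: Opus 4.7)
The plan is to imitate the proof of Lemma \ref{jacobi}: expand the integrand as a power series in $\epsilon$ whose coefficients are, up to nonzero constants, negative powers of $t$, interchange sum and integral, and extract infinitely many vanishing moments of $f$. Since $I$ is compact and avoids the origin, I would first choose $\epsilon_1 \in (0,\epsilon_0)$ such that $|\epsilon/t|<1$ uniformly in $(t,\epsilon)\in I\times(0,\epsilon_1)$. Multiplying the absolutely convergent binomial/geometric expansions of $(1-i\epsilon/t)^{-1/2}$ and $(1+i\epsilon/t)^{-1}$ would give
\[
(t-i\epsilon)^{-1/2}(t+i\epsilon)^{-1}=\sum_{n\geq 0}a_n(-i\epsilon)^n\,t^{-\tfrac{3}{2}-n},\qquad a_n:=\sum_{k=0}^n\binom{-1/2}{k}.
\]
The series converges absolutely and uniformly in $t\in I$ for $\epsilon\in(0,\epsilon_1)$ by comparison with a convergent geometric series, so I may integrate against $f\in L^2(I)$ and exchange summation and integration.

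The hypothesis then asserts that the power series $\sum_n a_n(-i\epsilon)^n\int_I f(t)\,t^{-3/2-n}\,\dd t$ vanishes identically on $(0,\epsilon_1)$, so each Taylor coefficient must vanish. Provided $a_n\neq 0$ for every $n$, this yields $\int_I f(t)\,t^{-3/2-n}\,\dd t=0$ for all $n\geq 0$. Because $t^{-3/2}$ is bounded above and bounded away from zero on $I$, the product $f(t)\,t^{-3/2}$ lies in $L^2(I)$ and is orthogonal to every monomial $t^{-n}$, $n\geq 0$. The family $\{t^{-n}\}_{n\geq 0}$ is dense in $C(I)$ by Weierstrass approximation applied to polynomials in $s=1/t$ on the compact set $1/I$, hence dense in $L^2(I)$, forcing $f=0$.

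The main obstacle is thus to verify that $a_n\neq 0$ for every $n$. Using the factorization $(1-z)^{-1}(1+z)^{-1/2}=(1-z^2)^{-1}(1+z)^{1/2}$, I can rewrite
\[
a_n=\sum_{\substack{0\leq j\leq n\\ j\equiv n\,(\mathrm{mod}\,2)}}\binom{1/2}{j}.
\]
An elementary sign count on the product formula for $\binom{1/2}{m}$ shows $\binom{1/2}{2k+1}>0$ for all $k\geq 0$ and $\binom{1/2}{2k}<0$ for all $k\geq 1$. For odd $n$, $a_n$ is then a sum of strictly positive terms and is positive. For even $n$, only $\binom{1/2}{0}=1$ in the defining sum is positive, so $(a_{2m})_{m\geq 0}$ is strictly decreasing and converges, by Abel's theorem, to $\sum_{j\text{ even}}\binom{1/2}{j}=\tfrac{1}{2}\bigl((1+1)^{1/2}+(1-1)^{1/2}\bigr)=1/\sqrt{2}$, forcing $a_{2m}>1/\sqrt{2}>0$ for every $m$. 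Hence $a_n>0$ for every $n$, which combined with the density argument above completes the proof.
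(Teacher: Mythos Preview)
Your proof is correct and follows the paper's strategy exactly: expand the kernel as a power series in $\epsilon$, show that every Taylor coefficient is nonzero, and conclude via the density of $\{t^{-n}\}_{n\ge 0}$ in $L^2(I)$. The only point of departure is how you verify $a_n\neq 0$. The paper observes directly that $a_n=\sum_{l=0}^{n}(-1)^l\frac{(2l-1)!!}{2^l l!}$ is an alternating sum of a strictly decreasing positive sequence, so consecutive pairs $c_l-c_{l+1}>0$ and every partial sum is positive. Your route---rewriting the generating function as $(1-z^2)^{-1}(1+z)^{1/2}$, splitting by parity, and using Abel's theorem to bound the even partial sums below by $1/\sqrt{2}$---is also valid and even yields the exact limit, but is more machinery than needed; the paper's two-line monotonicity check is the simpler path to the same conclusion.
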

\begin{proof}
The proof works in the same way as that of Lemma \ref{jacobi}. In order to have the conclusion, we only need to show that the Taylor expansion at $z=0$ of the function $(1-z)^{-\frac{1}{2}}(1+z)^{-1}$ does not contain any zero coefficients. To see that this is the case, first note that
\begin{equation}
(1-z)^{-\frac{1}{2}}=\sum_{j=0}^\infty\frac{(2j-1)!!}{2^jj!}z^j,\quad
(1+z)^{-1}=\sum_{j=0}^\infty (-1)^jz^j.
\end{equation}
Then
\begin{equation}
(1-z)^{-\frac{1}{2}}(1+z)^{-1}=
\sum_{j=0}^\infty (-1)^jz^j\sum_{l=0}^j(-1)^{l}\frac{(2l-1)!!}{2^ll!}.
\end{equation}
Take two successive terms from the second sum on the right hand side
\begin{equation}
\frac{(2l-1)!!}{2^ll!}-\frac{(2l+1)!!}{2^{l+1}(l+1)!}
=\frac{(2l-1)!!}{2^ll!}\left(1-\frac{2l+1}{2(l+1)}  \right)>0.
\end{equation}
We then see that $\sum_{l=0}^j(-1)^{l}\frac{(2l-1)!!}{2^ll!}$ can be written as a sum of positive terms, therefore it is non-zero for every $j$.
\end{proof}

We can now in the same way as above conclude that
\begin{equation}
C(\vc x):\vc\alpha\otimes\vc\alpha\otimes\vc e_2=0,\quad\forall \vc x\in\mathcal{O},
\end{equation}
which implies
\begin{equation}
2C(\vc x):\vc e_2\otimes\vc\alpha\otimes\vc\alpha=C(\vc x):(\vc e_2\otimes\vc\alpha\otimes\vc\alpha+\vc\alpha\otimes\vc e_2\otimes\vc\alpha)=0,\quad\forall \vc x\in\mathcal{O}.
\end{equation}
%
%
Putting everything together, we have
\begin{equation}\label{three-two}
C(\vc x):\vc e_2\otimes\vc\alpha\otimes\vc\alpha=C(\vc x):\vc\alpha\otimes\vc e_2\otimes\vc\alpha  = C(\vc x):\vc\alpha\otimes\vc\alpha\otimes\vc e_2= 0,\quad\forall \vc x\in\mathcal{O}.
\end{equation}

If we take the real part of \eqref{three-two} we get, for example, that
\begin{equation}\label{re1}
C(\vc x):\vc e_0\otimes\vc e_0\otimes\vc e_2=C(\vc x):\vc e_1\otimes\vc e_1\otimes\vc e_2,\quad\forall \vc x\in\mathcal{O}.
\end{equation}
We will later need to use the following consequence of \eqref{re1} that follows from a simple relabeling of axes
\begin{equation}\label{re2}
C(\vc x):\vc e_j\otimes\vc e_j\otimes\vc e_k=C(\vc x):\vc e_l\otimes\vc e_l\otimes\vc e_k,\quad\forall \vc x\in\mathcal{O}, j\neq k\neq l.
\end{equation} 

If we take the imaginary part of $C(\vc x):\vc\alpha\otimes\vc\alpha\otimes\vc e_2$ we get that
\begin{equation}\label{im1}
C(\vc x):\vc e_0\otimes\vc e_1\otimes\vc e_2=0,\quad\forall \vc x\in\mathcal{O}.
\end{equation}
Since the choice of coordinate axes is arbitrary, we can conclude that for any three mutually orthogonal vectors $\vc\mu_1, \vc\mu_2, \vc\mu_3\in\R^{1+n}$ we have
\begin{equation}\label{im2}
C(\vc x):\vc\mu_1\otimes\vc\mu_2\otimes\vc\mu_3=0,\quad\forall \vc x\in\mathcal{O}.
\end{equation}

\subsubsection{The $\lambda^{-\frac{5}{2}}$ term}

Here we will look at terms of order $\lambda^{-\frac{5}{2}}$ in \eqref{hormander}. As above, in order to avoid tedious computation we will only write down terms that contain a $q_1^jq_1^k$, $j\neq k$, term.

In terms of the expansion of $C :  V_{1;\tau}^+\otimes V_{2;\tau}^+\otimes\overline{V_{3;2\tau}^-}$, we need to compute the relevant terms of order $\lambda^{-2} x_2^0$, $\lambda^{-1} x_2^2$, and $\lambda^0  x_2^4$. All other orders do not contain terms that would satisfy our criteria. This also holds for all terms containing derivatives of $C$ that arise from $L_1(U)$ and $L_2(U)$.

There are two ways to obtain a $\lambda^{-2} x_2^0$ term, which we can denote ``$\lambda^{-2}\times 1
\times 1$'' and $\lambda^{-1}\times\lambda^{-1}\times 1$. Only the later contributes relevant terms. These are
\begin{multline}
\chi^3h_1h_2h_3 C: \Big[
v_{0;1}^1v_{0;1}^2\overline{v_{0;0}}\vc e_2\otimes\vc e_2\otimes\vc\alpha\\[5pt]
-\frac{1}{2}v_{0;1}^1\overline{v_{0;1}^3}v_{0;0}\vc e_2\otimes\vc\alpha\otimes\vc e_2
-\frac{1}{2}v_{0;1}^2\overline{v_{0;1}^3}v_{0;0}\vc\alpha\otimes\vc e_2\otimes\vc e_2
\Big]
\end{multline}

The relevant order $\lambda^{-1}$ term in $L_1(U)$ is obtained just from the $\pr_2^2$ part of that operator, the other term containing no $q_1^jq_1^k$ factors. It is therefore enough to identify the $\lambda^{-1} x_2^2$ terms in the expansion of $C :  V_{1;\tau}^+\otimes V_{2;\tau}^+\otimes\overline{V_{3;2\tau}^-}$. There are four ways to obtain $\lambda^{-1} x_2^2$. The first is picking  one term of order $\lambda^{-1} x_2^2$ from one of the $V_\tau^{\pm}$, multiplied by the zero order terms from the other two factors. This way produces no relevant terms. The other three ways could be abbreviated as ``$\tau^{-1}\times  x_2^2\times 1$'', ``$\tau^{-1} x_2\times  x_2\times 1$'', and ``$\tau^{-1}\times  x_2\times  x_2$''. 

The ``$\tau^{-1}\times  x_2^2\times 1$'' terms are
\begin{multline}
\chi^3 h_1h_2h_3 C:\Big[
2iv_{0;1}^1v_{0;1}^2\psi_2\overline{v_{0;0}}\vc e_2\otimes\vc e_2\otimes\vc\alpha
+2iv_{0;1}^1\overline{v_{0;1}^3}\overline{\psi}_2v_{0;0}\vc e_2\otimes\vc\alpha\otimes\vc e_2\\[5pt]
+2i v_{0;1}^2v_{0;1}^1\psi_2\overline{v_{0;0}}\vc e_2\otimes\vc e_2\otimes\vc\alpha
+2iv_{0;1}^2\overline{v_{0;1}^3}\overline{\psi}_2v_{0;0}\vc\alpha\otimes\vc e_2\otimes\vc e_2\\[5pt]
-i\overline{v_{0;1}^3}v_{0;1}^1\psi_2v_{0;0}\vc e_2\otimes\vc\alpha\otimes\vc e_2
-i\overline{v_{0;1}^3}v_{0;1}^2\psi_2v_{0;0}\vc\alpha\otimes\vc e_2\otimes\vc e_2
\Big].
\end{multline}

The ``$\tau^{-1} x_2\times  x_2\times 1$'' terms are
\begin{multline}
\chi^3 h_1h_2h_3 C:\Big[
\dot v_{0;1}^1v_{0;1}^2\overline{v_{0;0}}\vc e_1\otimes\vc\alpha\otimes\vc\alpha
+\dot v_{0;1}^1\overline{v_{0;1}^3} v_{0;0}\vc e_1\otimes\vc\alpha\otimes\vc\alpha\\[5pt]
+\dot v_{0;1}^2v_{0;1}^1\overline{v_{0;0}}\vc\alpha\otimes\vc e_1\otimes\vc\alpha
+\dot v_{0;1}^2\overline{v_{0;1}^3}v_{0;0}\vc\alpha\otimes\vc e_1\otimes\vc\alpha\\[5pt]
-\frac{1}{2}\overline{\dot v_{0;1}^3}v_{0;1}^1 v_{0;0}\vc\alpha\otimes\vc\alpha\otimes\vc e_1
-\frac{1}{2}\overline{\dot v_{0;1}^3}v_{0;1}^2 v_{0;0}\vc\alpha\otimes\vc\alpha\otimes\vc e_1
\Big].
\end{multline}

The ``$\tau^{-1}\times  x_2\times  x_2$'' terms are
\begin{multline}
\chi^3 h_1h_2h_3 C:\Big[
\dot v_{0;0}v_{0;1}^2\overline{v_{0;1}^3}\vc e_1\otimes\vc\alpha\otimes\vc\alpha
+\dot v_{0;0}v_{0;1}^1\overline{v_{0;1}^3}\vc\alpha\otimes\vc e_1\otimes\vc\alpha
-\frac{1}{2}\overline{\dot v_{0;0}}v_{0;1}^1v_{0;1}^2\vc\alpha\otimes\vc\alpha\otimes\vc e_1\\[5pt]
+2i v_{0;1}^1v_{0;1}^2\overline{v_{0;0}}\overline{\psi}_2 \vc e_2\otimes\vc\alpha\otimes\vc e_2
+2i v_{0;1}^1\overline{v_{0;1}^3} v_{0;0}\psi_2 \vc e_2\otimes\vc e_2\otimes\vc\alpha\\[5pt]
+2i v_{0;1}^2v_{0;1}^1\overline{v_{0;0}}\overline{\psi}_2 \vc\alpha\otimes\vc e_2\otimes\vc e_2
+2i v_{0;1}^2\overline{v_{0;1}^3} v_{0;0}\psi_2 \vc e_2\otimes\vc e_2\otimes\vc\alpha\\[5pt]
-i \overline{v_{0;1}^3} v_{0;1}^1v_{0;0}\psi_2 \vc\alpha\otimes\vc e_2\otimes\vc e_2
-i \overline{v_{0;1}^3} v_{0;1}^2v_{0;0}\psi_2 \vc e_2\otimes\vc\alpha\otimes\vc e_2
\Big].
\end{multline}

It may at first seem that  a few terms containing factors like $C:\nabla'''h_1\otimes\vc \alpha\otimes\vc\alpha$ or $C:\nabla'''h_1\otimes \vc e_2\otimes\vc\alpha$ should also appear here. Note however that by \eqref{three-two}, the fact that $\nabla'''h_1\perp\vc\alpha$,  and the freedom to choose coordinate axes we can conclude that factors like $C:\nabla'''h_1\otimes\vc \alpha\otimes\vc\alpha$ are in fact zero. A similar comment can be made about the other kind of terms. These  can be seen to be zero by  \eqref{im2}, since $\nabla'''h_j$, $\vc e_0$, $\vc e_1$, $\vc e_2$ are mutually orthogonal.

The relevant order $\lambda^0$ term in $L_2(U)$ is obtained from the $\pr_2^4$ part of the operator acting on the $\lambda^0 x_2^4$ term in the expansion of $C :  V_{1;\tau}^+\otimes V_{2;\tau}^+\otimes\overline{V_{3;2\tau}^-}$. There are   ways of obtaining $\lambda^0 x_2^4$, namely ``$ x_2^4\times1\times1$'', ``$ x_2^3\times  x_2\times1$'', ``$ x_2^2\times  x_2^2\times 1$'', and ``$ x_2^2\times  x_2\times  x_2$''. There are no relevant terms that can be obtained via  ``$ x_2^4\times1\times1$''.

The ``$ x_2^3\times  x_2\times1$'' terms are
\begin{multline}
\chi^3h_1h_2h_3C:\Big[
i\dot\psi_2 \overline{v_{0;0}}v_{0;1}^1v_{0;1}^2\vc e_1\otimes\vc\alpha\otimes\vc\alpha
+i\dot\psi_2 v_{0;0}v_{0;1}^1\overline{v_{0;1}^3}\vc e_1\otimes\vc\alpha\otimes\vc\alpha\\[5pt]
+i\dot\psi_2 \overline{v_{0;0}}v_{0;1}^2v_{0;1}^1\vc\alpha\otimes\vc e_1\otimes\vc\alpha
+i\dot\psi_2 v_{0;0}v_{0;1}^2\overline{v_{0;1}^3}\vc\alpha\otimes\vc e_1\otimes\vc\alpha\\[5pt]
+i\overline{\dot\psi}_2v_{0;0}\overline{v_{0;1}^3}v_{0;1}^1\vc\alpha\otimes\vc\alpha\otimes\vc e_1
+i\overline{\dot\psi}_2v_{0;0}\overline{v_{0;1}^3}v_{0;1}^2\vc\alpha\otimes\vc\alpha\otimes\vc e_1
\Big].
\end{multline}

The ``$ x_2^2\times  x_2^2\times 1$'' terms are
\begin{multline}
(-4)\chi^3h_1h_2h_3C:\Big[
(\psi_2)^2\overline{v_{0;0}}v_{0;1}^1v_{0;1}^2\vc e_2\otimes\vc e_2\otimes\vc\alpha
+|\psi_2|^2{v_{0;0}}v_{0;1}^1\overline{v_{0;1}^3}\vc e_2\otimes\vc\alpha\otimes\vc e_2\\[5pt]
+|\psi_2|^2{v_{0;0}}v_{0;1}^2\overline{v_{0;1}^3}\vc\alpha\otimes\vc e_2\otimes\vc e_2
\Big].
\end{multline}

The ``$ x_2^2\times  x_2\times  x_2$'' terms are
\begin{multline}
\chi^3h_1h_2h_3C:\Big[
iv_{0;0}\dot\psi_2v_{0;1}^2\overline{v_{0;1}^3}\vc e_1\otimes\vc\alpha\otimes\vc\alpha
-4\overline{v_{0;0}}|\psi_2|^2v_{0;1}^1v_{0;1}^2\vc e_2\otimes\vc\alpha\otimes\vc e_2\\[5pt]
-4{v_{0;0}}(\psi_2)^2v_{0;1}^1\overline{v_{0;1}^3}\vc e_2\otimes\vc e_2\otimes\vc\alpha\\[5pt]
+iv_{0;0}\dot\psi_2v_{0;1}^1\overline{v_{0;1}^3}\vc\alpha\otimes\vc e_1\otimes\vc\alpha
-4\overline{v_{0;0}}|\psi_2|^2v_{0;1}^2v_{0;1}^1\vc\alpha\otimes\vc e_2\otimes\vc e_2\\[5pt]
-4{v_{0;0}}(\psi_2)^2v_{0;1}^2\overline{v_{0;1}^3}\vc e_2\otimes\vc e_2\otimes\vc\alpha\\[5pt]
+i\overline{v_{0;0}}\overline{\dot\psi}_2v_{0;1}^1{v_{0;1}^2}\vc\alpha\otimes\vc\alpha\otimes\vc e_1
-4{v_{0;0}}|\psi_2|^2\overline{v_{0;1}^3}v_{0;1}^1\vc\alpha\otimes\vc e_2\otimes\vc e_2\\[5pt]
-4{v_{0;0}}|\psi_2|^2\overline{v_{0;1}^3}v_{0;1}^2\vc e_2\otimes\vc\alpha\otimes\vc e_2
\Big]
\end{multline}

We must now put together all the above contributions to the order $\lambda^{-2}$ term of $L_0(U)+\lambda^{-1}L_1(U)+\lambda^{-2}L_2(U)$. We choose parameters $q_1^1=q_1^2=q$. Furthermore, we will from now on only consider terms that contain the factor $qq_1^3$. 

Using the fact that $S=0$, which implies the symmetries 
\begin{equation}
2C:\vc e_1\otimes\vc\alpha\otimes\vc\alpha=2C:\vc\alpha\otimes\vc e_1\otimes\vc\alpha=-C:\vc\alpha\otimes\vc\alpha\otimes\vc e_1,
\end{equation}
\begin{equation}
2C:\vc e_2\otimes\vc\alpha\otimes\vc e_2=2C:\vc\alpha\otimes\vc e_2\otimes\vc e_2=-C:\vc e_2\otimes\vc e_2\otimes\vc\alpha,
\end{equation}
we compute the contribution from $L_2(U)$ to the coefficient of $\lambda^{-2}qq_1^3$ to be
\begin{multline}
(-1)\frac{3}{4\epsilon}( x_1-i\epsilon)^{-\frac{1}{2}}( x_1^2+\epsilon^2)^{-\frac{3}{2}}h_1h_2h_3C|_{ x_2=0}\\[5pt]:\Big[ x_1\vc\alpha\otimes\vc\alpha\otimes\vc e_1+( x_1+i\epsilon)\vc e_2\otimes\vc e_2\otimes\vc\alpha
\Big].
\end{multline}  
Similarly we compute the contribution from $L_1(U)$ to the coefficient of $\lambda^{-2}qq_1^3$ to be
\begin{multline}
\frac{1}{4\epsilon}( x_1-i\epsilon)^{-\frac{1}{2}}( x_1^2+\epsilon^2)^{-\frac{3}{2}}h_1h_2h_3C|_{ x_2=0}\\[5pt]
:\Big[\frac{1}{2}\big(4( x_1+i\epsilon)+3( x_1-i\epsilon)  \big)\vc\alpha\otimes\vc\alpha\otimes\vc e_1\\[5pt]+i\big(3( x_1+i\epsilon)-( x_1-i\epsilon)\big)\vc e_2\otimes\vc e_2\otimes\vc\alpha
\Big].
\end{multline}  
The contribution from $L_0(U)$ to the coefficient of $\lambda^{-2}qq_1^3$ is
\begin{equation}
\frac{1}{2}( x_1-i\epsilon)^{-\frac{1}{2}}( x_1^2+\epsilon^2)^{-\frac{3}{2}}h_1h_2h_3C|_{ x_2=0}:\vc e_2\otimes\vc e_2\otimes\vc\alpha.
\end{equation}
Adding these we obtain that the coefficient of $\lambda^{-2}qq_1^3$  in $L_0(U)+\lambda^{-1}L_1(U)+\lambda^{-2}L_2(U)$ is
\begin{multline}
\frac{1}{4\epsilon}( x_1+i\epsilon)( x_1-i\epsilon)^{-\frac{1}{2}}( x_1^2+\epsilon^2)^{-\frac{3}{2}}h_1h_2h_3C|_{ x_2=0}\\[5pt]
:\Big[
\frac{1}{2}\vc\alpha\otimes\vc\alpha\otimes\vc e_1
+(3-2i)\vc e_2\otimes\vc e_2\otimes\vc\alpha
\Big]
\end{multline}

From \eqref{hormander} we now get the identity
\begin{multline}
0
=\int e^{-4\sigma  x_1}\hat C(4\sigma, x_1,0,\vc x''') : \Big[
\frac{1}{2}\vc\alpha\otimes\vc\alpha\otimes\vc e_1
+(3-2i)\vc e_2\otimes\vc e_2\otimes\vc\alpha
\Big]\\[5pt] \times ( x_1-i\epsilon)^{-\frac{3}{2}}
h_1(\vc x''')h_2(\vc x''')h_3(\vc x''')\dd  x_1\dd \vc x'''.
\end{multline}

Repeating the same method of proof we have used above, we can conclude that
\begin{equation}\label{five-two}
C(\vc x):\Big[
\frac{1}{2}\vc\alpha\otimes\vc\alpha\otimes\vc e_1
+(3-2i)\vc e_2\otimes\vc e_2\otimes\vc\alpha
\Big]=0,\quad\forall \vc x\in\mathcal{O}.
\end{equation}
If we take the real part of this identity we get
\begin{equation}
C(\vc x):\Big[
\frac{1}{2}\vc e_0\otimes\vc e_0\otimes\vc e_1+3\vc e_2\otimes\vc e_2\otimes\vc e_0
+2\vc e_2\otimes\vc e_2\otimes\vc e_1
\Big]=0,\quad \forall \vc x\in\mathcal{O}.
\end{equation}
Using \eqref{re2} this becomes
\begin{equation}\label{re3}
C(\vc x):\Big[
3\vc e_2\otimes\vc e_2\otimes\vc e_0
+\frac{5}{2}\vc e_2\otimes\vc e_2\otimes\vc e_1
\Big]=0,\quad \forall \vc x\in\mathcal{O}.
\end{equation}

If we take the imaginary part of \eqref{five-two} we get
\begin{equation}
C(\vc x):\Big[
\vc e_0\otimes\vc e_1\otimes\vc e_1+3\vc e_2\otimes\vc e_2\otimes\vc e_1
-2\vc e_2\otimes\vc e_2\otimes\vc e_0
\Big]=0,\quad\forall \vc x\in\mathcal{O}.
\end{equation}
Using \eqref{re2} again we have
\begin{equation}
C(\vc x):\Big[
\vc e_0\otimes\vc e_2\otimes\vc e_2+3\vc e_2\otimes\vc e_2\otimes\vc e_1
-2\vc e_2\otimes\vc e_2\otimes\vc e_0
\Big]=0,\quad\forall \vc x\in\mathcal{O}.
\end{equation}
Since $S=0$
\begin{equation}
C:\vc e_0\otimes\vc e_2\otimes\vc e_2=-\frac{1}{2}C:\vc e_2\otimes\vc e_2\otimes\vc e_0,
\end{equation}
so
\begin{equation}\label{im3}
C(\vc x):\Big[
3\vc e_2\otimes\vc e_2\otimes\vc e_0-\frac{5}{2}\vc e_2\otimes\vc e_2\otimes\vc e_1
\Big]=0,\quad \forall \vc x\in\mathcal{O}.
\end{equation}
From \eqref{re3} and \eqref{im3} we conclude that
\begin{equation}\label{re-im}
C(\vc x):\vc e_2\otimes\vc e_2\otimes\vc e_0=0,\quad\forall \vc x\in\mathcal{O}.
\end{equation}

Combining the fact that $S=0$ with \eqref{im2}, \eqref{re-im}, and using the freedom we have to relabel coordinate axes, we have that for all $\vc x\in\mathcal{O}$, $j,k,l=0,\ldots, n$
\begin{equation}
C(\vc x):\vc e_j\otimes\vc e_j\otimes\vc e_j=0,
\end{equation}
\begin{equation}
C(\vc x):\vc e_j\otimes\vc e_k\otimes\vc e_k=C(\vc x):\vc e_k\otimes\vc e_j\otimes\vc e_k=C(\vc x):\vc e_k\otimes\vc e_k\otimes\vc e_j=0, \, j\neq k,
\end{equation}
\begin{equation}
C(\vc x):\vc e_j\otimes\vc e_k\otimes\vc e_l=0,\quad j\neq k\neq l\neq j.
\end{equation}
This amounts to $C=0$.

\paragraph{Acknowledgments:} C\u at\u alin I. C\^arstea was supported by NSF of China under grant 11931011. Ali Feizmohammadi was supported by EPSRC grant EP/P01593X/1.

\newpage
\appendix

\section{Summary of the results of computations}\label{appendix-results}

We list here for easy reference the results of some of our computations.

\subsection{Expansion of solutions}

Below we give the first few terms in the expansions of $\Psi( x_1, x_2)$ and $a_\tau( x_1, x_2)$, which both appear in the construction of the harmonic functions $u_\tau^\pm$.

\begin{equation}
u_\tau^\pm(\vc x)=e^{\pm\lambda  x_0}\left( e^{\pm i\sigma  x_0} e^{i\tau\Psi( x_1, x_2)}a_\tau( x_1, x_2)+r_\tau(\vc x)   \right)
\end{equation}
\begin{equation}
\Psi( x_1, x_2)=\sum_{j=0}^M\psi_j( x_1) x_2^j
\end{equation}
\begin{equation}
a_\tau( x_1, x_2)=
\chi(\frac{ x_2}{\delta})h(\vc x''')\sum_{k=0}^Mv_k( x_1, x_2)\tau^{-k}
\end{equation}
\begin{equation}
v_k( x_1, x_2)=\sum_{j=0}^M v_{k;j}( x_1) x_2^j
\end{equation}

\begin{equation}
\psi_0( x_1)= x_1,\quad \psi_1( x_1)=0
\end{equation}
\begin{equation}
\psi_2( x_1)=\frac{1}{2}( x_1-i\epsilon)^{-1}
\end{equation}
\begin{equation}
\psi_3=p_3( x_1-i\epsilon)^{-3},\quad p_3\in\C
\end{equation}
\begin{equation}
\psi_4( x_1)=-\frac{1}{8}( x_1-i\epsilon)^{-3}+\frac{9}{2}p_3^2( x_1-i\epsilon)^{-5}+p_4( x_1-i\epsilon)^{-4},\quad p_4\in\C.
\end{equation}
\begin{multline}
\psi_5( x_1)
=27p_3^3( x_1-i\epsilon)^{-7}+12p_3p_4( x_1-i\epsilon)^{-6}
\\[5pt]
 +p_5( x_1-i\epsilon)^{-5},\quad p_5\in\C.
\end{multline}

\begin{equation}
v_{0;0}=( x_1-i\epsilon)^{-\frac{1}{2}}.
\end{equation}
\begin{equation}
v_{0;1}=3p_3( x_1-i\epsilon)^{-\frac{5}{2}}+q_1( x_1-i\epsilon)^{-\frac{3}{2}},\quad q_1\in\C.
\end{equation}


\subsection{The $V_\tau^\pm$ terms}

Here we record the expansions of the quantities $V_\tau^\pm$.

\begin{multline}
V_\tau^+=\tau^{-1} e^{-\tau  x_0}e^{-i\tau\Psi}\nabla u_\tau^+\\[5pt]
=\chi h\Big[v_{0;0}\vc\alpha+ x_2\left[v_{0;1}\vc\alpha+2iv_{0;0}\psi_2\vc e_2  \right]\\[5pt]
+ x_2^2\left[v_{0;2}\vc\alpha+iv_{0;0}\dot\psi_2\vc e_1+i(2v_{0;1}\psi_2+3v_{0;0}\psi_3)\vc e_2  \right]\\[5pt]
+\tau^{-1}\left[v_{1;0}\vc\alpha +\dot v_{0;0}\vc e_1+v_{0;1}\vc e_2 \right]\\[5pt]
+ x_2^3\left[v_{0;3}\vc\alpha+i(v_{0;1}\dot\psi_2+v_{0;0}\dot\psi_3)\vc e_1 +i(2v_{0;2}\psi_2+3v_{0;1}\psi_3+4v_{0;0}\psi_4)\vc e_2   \right]\\[5pt]
+\tau^{-1} x_2\left[ v_{1;1}\vc\alpha+\dot v_{0;1}\vc e_1 +(2iv_{1;0}\psi_2+2v_{0;2})\vc e_2 \right]\Big]\\[5pt]
+\tau^{-1}\left[ \delta^{-1}\dot\chi h\vc e_2+\chi\nabla''' h\right](v_{0;0}+v_{0;1} x_2)+\cdots.
\end{multline}
\begin{multline}
(-1)\overline{V_\tau^-}:=(-1)\overline{\tau^{-1} e^{\tau  x_0}e^{i\tau\Psi}\nabla u_\tau^-}\\[5pt]
=\chi h\Big[\overline{v_{0;0}}\vc\alpha+ x_2\left[\overline{v_{0;1}}\vc\alpha+2i\overline{v_{0;0}}\overline{\psi_2}\vc e_2  \right]\\[5pt]
+ x_2^2\left[\overline{v_{0;2}}\vc\alpha+i\overline{v_{0;0}}\overline{\dot\psi_2}\vc e_1+i(2\overline{v_{0;1}}\overline{\psi_2}+3\overline{v_{0;0}}\overline{\psi_3})\vc e_2  \right]\\[5pt]
+\overline{\tau^{-1}}\left[\overline{v_{1;0}}\vc\alpha -\overline{\dot v_{0;0}}\vc e_1-\overline{v_{0;1}}\vc e_2 \right]\\[5pt]
+ x_2^3\left[\overline{v_{0;3}}\vc\alpha+i(\overline{v_{0;1}}\overline{\dot\psi_2}+\overline{v_{0;0}}\overline{\dot\psi_3})\vc e_1 +i(2\overline{v_{0;2}}\overline{\psi_2}+3\overline{v_{0;1}}\overline{\psi_3}+4\overline{v_{0;0}}\overline{\psi_4})\vc e_2   \right]\\[5pt]
+\overline{\tau^{-1}} x_2\left[ \overline{v_{1;1}}\vc\alpha-\overline{\dot v_{0;1}}\vc e_1 +(2i\overline{v_{1;0}}\overline{\psi_2}-2\overline{v_{0;2}})\vc e_2 \right]\Big]\\[5pt]
-\overline{\tau^{-1}}\left[ \delta^{-1}\dot\chi h\vc e_2+\chi\nabla''' h\right](\overline{v_{0;0}}+\overline{v_{0;1}} x_2)+\cdots.
\end{multline}

\bibliography{quasilinear}
\bibliographystyle{plain}

\end{document}